\DeclareMathDelimiterSet{\scal}[2]{
  \selectdelim[l]< {#1} \mathpunct{}\selectdelim[p]| {#2} \selectdelim[r]>
}
\newcommand{\menge}[2]{
  \bigl\{{#1}\mid{#2}\bigr\}
}
\DeclareMathDelimiterSet{\Menge}[2]{
  \selectdelim[l]\{{#1}\selectdelim[m]|{#2}\selectdelim[r]\}
}
\newcommand*\Cdot{{\mkern 1.6mu\cdot\mkern 1.6mu}}
\DeclareMathDelimiterSet{\killing}[2]{
  \selectdelim[l]( {#1} \mathpunct{}\selectdelim[p]| {#2} \selectdelim[r])
}
\newcommand{\AD}{\mathcalboondox{A}}
\newcommand{\DD}{\mathcalboondox{D}}
\newcommand{\KD}{\mathcalboondox{K}}
\newcommand{\XD}{\mathcalboondox{X}}
\newcommand{\ad}{\mathcalboondox{a}}
\newcommand{\bd}{\mathcalboondox{b}}
\newcommand{\CC}{\mathbb{C}}
\newcommand{\KB}{\mathbb{K}}
\newcommand{\HB}{\mathbb{H}}
\newcommand{\NN}{\mathbb{N}}
\newcommand{\RR}{\mathbb{R}}
\newcommand{\HH}{\mathcal{H}}
\newcommand{\GG}{\mathcal{G}}
\newcommand{\UU}{\mathcal{U}}
\newcommand{\XX}{\mathcal{X}}
\newcommand{\GS}{\mathsf{G}}
\newcommand{\OS}{\mathsf{O}}
\newcommand{\HS}{\mathsf{H}}
\newcommand{\US}{\mathsf{U}}
\newcommand{\PS}{\mathsf{P}}
\newcommand{\SD}{\mathsf{S}}
\newcommand{\SO}{\mathsf{SO}}
\newcommand{\gs}{\mathsf{g}}
\newcommand{\hs}{\mathsf{h}}
\newcommand{\ts}{\mathsf{t}}
\newcommand{\sd}{\mathsf{s}}
\newcommand{\FH}{\mathfrak{H}}
\newcommand{\FS}{\mathsf{F}}
\newcommand{\CS}{\mathsf{C}}
\newcommand{\LS}{\mathsf{L}}
\newcommand{\pinf}{{+}\infty}
\newcommand{\minf}{{-}\infty}
\newcommand{\emp}{\varnothing}
\newcommand{\exi}{\exists\,}
\renewcommand{\leq}{\leqslant}
\renewcommand{\geq}{\geqslant}
\newcommand\restr[2]{{
  \left.\kern-\nulldelimiterspace 
    #1 
  \right|_{#2} 
  }
}
\newcommand{\zeroun}{\intv[o]{0}{1}}
\newcommand{\rzeroun}{\intv[l]{0}{1}}
\newcommand{\RXX}{\intv{\minf}{\pinf}}
\newcommand{\RX}{\intv[l]{\minf}{\pinf}}
\newcommand{\RP}{\RR_+}
\newcommand{\RPP}{\intv[o]{0}{\pinf}}
\DeclareMathOperator{\ran}{range}
\DeclareMathOperator{\inte}{int}
\DeclareMathOperator{\bdry}{bdry}
\DeclareMathOperator{\sign}{sign}
\newcommand{\Sum}{\displaystyle\sum}
\newcommand{\Id}{\mathrm{Id}}
\DeclareMathOperator{\cone}{cone}
\DeclareMathOperator{\conv}{conv}
\DeclareMathOperator{\dom}{dom}
\DeclareMathOperator{\epi}{epi}
\DeclareMathOperator{\gra}{gra}
\DeclareMathOperator{\Proj}{Proj}
\newcommand*{\tran}{^{\mkern-1.6mu\mathsf{T}}}
\renewcommand{\Re}{\operatorname{Re}}
\DeclareMathOperator{\tra}{tra}
\DeclareMathOperator{\Tra}{Tra}
\DeclareMathOperator{\Diag}{Diag}
\begin{document}

\author{H\`oa T. B\`ui}
\affil{%
  Curtin University
  \affilcr
  Centre for Optimisation and Decision Science
  \affilcr
  Kent Street, Bentley, Western Australia 6102, Australia
  \affilcr
  \email{hoa.bui@curtin.edu.au}
}

\author{Minh N. B\`ui}
\affil{%
  University of Graz
  \affilcr
  Department of Mathematics and Scientific Computing, NAWI Graz
  \affilcr
  Heinrichstra\ss{}e 36, 8010 Graz, Austria
  \affilcr
  \email{minh.bui@uni-graz.at}
}

\author{Christian Clason}
\affil{%
  University of Graz
  \affilcr
  Department of Mathematics and Scientific Computing, NAWI Graz
  \affilcr
  Heinrichstra{\ss}e 36, 8010 Graz, Austria
  \affilcr
  \email{c.clason@uni-graz.at}
}

\title{Variational Analysis in Spectral Decomposition Systems\thanks{%
    Corresponding author: M.~N.~B\`ui (\email{minh.bui@uni-graz.at}).
  The work of H.~T.~B\`ui was supported by the Australian Research
  Council through the Centre for Transforming Maintenance through Data
  Science grant number IC180100030.
  This research was funded in whole or in part by the Austrian Science
  Fund (FWF)
  \href{https://dx.doi.org/10.55776/F100800}{10.55776/F100800}.%
}}

\date{~}

\thispagestyle{plain.scrheadings}

\maketitle

\begin{abstract}
    This work is concerned with variational analysis of so-called spectral
    functions and spectral sets of matrices that only depend on
    eigenvalues of the
    matrix. Based on our previous work \cite{PartI} on convex analysis
    of such
    functions, we consider the question in the abstract framework of spectral
    decomposition systems, which covers a wide range of previously studied
    settings, including eigenvalue decomposition of Hermitian matrices and
    singular value decomposition of rectangular matrices, and allows deriving
    new results in more general settings such as normal decomposition systems
    and signed singular value decompositions. The main results characterize
    Fréchet and limiting normal cones to spectral sets as well as Fréchet,
    limiting, and Clarke subdifferentials of spectral functions in terms of the
    reduced functions. For the latter, we also characterize Fréchet
    differentiability. Finally, we obtain a generalization of Lidski\u{\i}'s
    theorem on the spectrum of additive perturbations of Hermitian matrices to
    arbitrary spectral decomposition systems.
\end{abstract}

\begin{keywords}
  Fréchet normal cone;
  limiting normal cone;
  Fréchet subdifferential;
  limiting subdifferential;
  Fréchet differentiability;
  Clarke subdifferential;
  Lidski\u{\i}'s theorem;
  spectral decomposition system;
  spectral function;
  spectral set;
\end{keywords}

\newpage

\section{Introduction}

Many practically relevant optimization problems are naturally posed in terms of
matrices instead of vectors; prominent examples include non-negative matrix
factorization \cite{Hoyer03}, matrix completion \cite{Candes10}, low-rank
approximation \cite{EckartYoung36,Markovsky08}, or operator learning
\cite{Kovachki23}. Particularly -- but not only -- in the last example, one is
actually interested in optimizing over (finite-dimensional) linear operators
and not their particular matrix representations. This implies that the
functions to be minimized should be invariant under basis changes. It is a
well-known fact from linear algebra that, under appropriate assumptions, such
functions are fully characterized by their dependence on the eigenvalues (or
singular values) of its argument; the most well-known example is probably the
nuclear norm of a matrix. Correspondingly, examples of such \emph{spectral
functions} are ubiquitous in applications such as robust matrix estimation
\cite{Benfenati20}, signal processing \cite{Chan16}, conic programming
\cite{Coey23}, semi-definite programming \cite{Hu23}, nonlinear elasticity
\cite{Rosakis98}, and brain network analysis \cite{Yang15}. Along the same
lines, such optimization problems can include constraints in terms of
\emph{spectral sets} that are defined in terms of eigenvalues or singular
values; the most common example is the set of positive (semi-)definite
matrices.

While many of these problems can be formulated as convex problems, this is not
always the case; by way of example we only mention low-rank matrix completion
via Schatten $p$-norm minimization for $0<p<1$ \cite{Marjanovic12} or
mathematical programs with semidefinite cone complementarity constraints
(SDCMPCC) \cite{DingSunYe14}. Here the main question is about the
characterization of the fundamental objects of variational analysis and geometry
such as (Clarke, Fréchet, limiting) subdifferentials or normal cones of the
spectral function or set in terms of the invariant function or set of the
eigenvalues (or singular values) in order to obtain sharp necessary optimality
conditions. The central challenge in this is the fact that the invariant
function only depends on the \emph{set} of eigenvalues but not their ordering.
The same issue already arises in studying the Fréchet differentiability and
characterizing the Fréchet derivative of spectral functions.

Correspondingly, variational analysis of convex and nonconvex spectral
optimization problems has been studied in a variety of different settings. For
symmetric functions of eigenvalues, \cite{Lewis96c} analyzed Fr\'echet
differentiability and characterized Clarke subdifferentials while
\cite{Lewis99b} treated Fr\'echet, limiting, horizon, and Clarke
subdifferentials. Later, \cite{Drus18} provided a simplified approach for these
results. Higher-order differentiability was studied in \cite{Ball84}, where
also other related type of functions such as radial functions and isotropic
functions were considered.  Following the same patterns as in \cite{Lewis99b},
the series of works \cite{Lewis05a,Lewis05b} studied Fr\'echet, limiting,
horizon, and Clarke subdifferentials of signed-symmetric functions of singular
values of rectangular matrices.
These results have also been generalized to the setting of Euclidean Jordan
algebras (see \cref{ex:3} below). Specifically, \cite{Sun08} analyzed
(higher-order) differentiability analyticity, and semismoothness of
L\"{o}wner's operator and spectral functions, \cite{Baes07} studied convex
analysis and differentiability of (non-convex) spectral functions.  On the
other hand, following the pattern developed by Lewis in
\cite{Lewis96c,Lewis99b} for the case of symmetric matrices, \cite{Lourenco20}
characterized Fr\'echet, limiting, horizon, and Clarke subdifferentials. See
also \cite{Sendov07} for these results in the setting of a specific Euclidean
Jordan algebra.

However, each of these works treated a specific setting in isolation. Building
on our earlier work \cite{PartI} on convex analysis of spectral functions, we
therefore aim to bring together these results on Fréchet, limiting, and Clarke
subdifferentials as well as Fréchet differentiability in a general framework
that covers all these settings and -- more importantly -- allows deriving
results more easily for settings and objects not covered so far. In a nutshell,
we work in a \emph{spectral decomposition system} consisting of
\begin{enumerate}
    \item a family of \emph{spectral decompositions} that generalize
        constructing a matrix with given eigenvalues (e.g., via a basis of
        eigenvectors);
    \item a \emph{spectral mapping} that generalizes computing the eigenvalues
        from a given matrix;
    \item an \emph{ordering mapping} that generalizes sorting eigenvalues in
        decreasing order;
\end{enumerate}
that satisfy some natural compatibility conditions such as a generalization of
von Neumann's trace inequality; see \cref{d:1} for a precise definition. To be
able to treat limiting subdifferentials, we also require -- in addition
to our previous work -- a closedness condition on the family of spectral
decompositions (\cref{a:1}). This definition covers all previously considered
settings in uniform generality (applying, for instance, in each case to
matrices over the real, complex, or quaternion fields). Moreover, our results
appear to be new in the setting of normal decomposition system \cite{Lewis96b},
which settles the open question set forth in the discussion in the paragraph
following \cite[Theorem~7.2]{Lewis03}, as well as for the setting of signed
singular values as studied in \cite{Dacorogna07} and \cite{Rosakis98} (which
treated only the case of convex spectral functions). Our general approach
allows bypassing the matrix-dependent proof techniques of existing works via a
geometric approach common in variational analysis: First we leverage the
results in the first part \cite{PartI} to establish characterizations of normal
cones to spectral sets, which are of interest in their own right. We then
transfer these results in the usual way via normal cones to epigraphs (with the
help of ``product space spectral decompositions'', see \cref{ex:7}) to the
subdifferentials of interest. Along the way, we give a full characterization of
the Fréchet differentiability of spectral functions. Our characterization of
the Clarke subdifferential also yields a generalized Lidski\u{\i}'s theorem on
the effect of additive perturbations on the spectrum (see \cref{t:3}) that
brings together the classical Lidski\u{\i}'s theorem \cite{Lidskii50}, the
version for rectangular matrices, the version for Euclidean Jordan algebras
\cite{Jeong20}, and the version for Eaton triples \cite{Hill01} (in particular,
the Lie-theoretic majorization result of \cite{Berezin56}).

\bigskip

This work is organized as follows. In the next \cref{sec:2}, we recall the
precise definition of spectral decomposition systems and illustrate how it
covers previously studied settings.
We also recall from \cite{PartI} various basic properties that are fundamental
to the analysis in this work.
\Cref{sec:3} is then concerned with variational geometry of spectral sets,
where we give full characterizations of the Fréchet and limiting normal cones
(\cref{p:2}).
In \cref{sec:4}, we exploit these results to obtain full characterizations of
the Fréchet and limiting subdifferential of spectral functions in terms of
their invariant functions (\cref{t:1}). The former in particular yields a full
characterization of Fréchet differentiability and of Fréchet derivatives
(\cref{c:5}). We also obtain from this a representation of the Clarke
subdifferential of spectral functions (\cref{p:3}).
In the final \cref{sec:5}, this is then used to derive a generalization of
Lidski\u{\i}'s theorem in spectral decomposition systems (\cref{t:3}).

\section{Spectral decomposition systems}\label{sec:2}

Let $\HH$ be a Euclidean space, that is, a finite-dimensional real
Hilbert space. The scalar product and its associated norm are denoted by
$\scal{\Cdot}{\Cdot}$ and $\norm{\Cdot}$, respectively.
(We will use $\norm{\Cdot}_{\HH}$ when there is a potential
ambiguity.)
The space of linear operators from $\HH$ to a Euclidean space
$\GG$ is denoted by $\mathscr{L}(\HH,\GG)$ and we equip it with the
topology induced by the norm
\begin{equation}
  \brk!{\forall L\in\mathscr{L}(\HH,\GG)}\quad
  \norm{L}_{\mathscr{L}(\HH,\GG)}
  =\max_{\substack{x\in\HH\\ \norm{x}\leq 1}}\norm{Lx},
\end{equation}
and we write $\mathscr{L}(\HH)=\mathscr{L}(\HH,\HH)$.
(Since we are working in finite-dimensional spaces, this topology
coincides with the topology of pointwise convergence.)
Group operations are written multiplicatively.
Now let $\GS$ be a group acting on $\HH$,
where we use $\cdot$ to denote group action.
Additionally:
\begin{itemize}
  \item The \emph{orbit} of an element $x\in\HH$ is
    $\GS\cdot x=\menge{\gs\cdot x}{\gs\in\GS}$.
  \item Given a nonempty set $\UU$, a mapping
    $\Phi\colon\HH\to\UU$ is \emph{$\GS$-invariant} if
    $(\forall x\in\HH)(\forall\gs\in\GS)$
    $\Phi(\gs\cdot x)=\Phi(x)$.
  \item
    A subset $C$ of $\HH$ is \emph{$\GS$-invariant} if its indicator
    function
    \begin{equation}
      \iota_C\colon\HH\to\RX\colon x\mapsto
      \begin{cases}
        0&\text{if}\,\,x\in C,\\
        \pinf&\text{otherwise},
      \end{cases}
    \end{equation}
    is $\GS$-invariant or, equivalently,
    $(\forall x\in C)(\forall\gs\in\GS)$ $\gs\cdot x\in C$.
  \item
    We say that \emph{$\GS$ acts on $\HH$ by linear isometries}
    if, for every $\gs\in\GS$, the mapping
    $\HH\to\HH\colon x\mapsto\gs\cdot x$ is a linear isometry.
\end{itemize}

\subsection{Definitions, assumptions, and characterization}

We are now ready to state the formal definition of the abstract
framework for this work.

\begin{definition}[spectral decomposition system
  {\protect\cite[Definition~2.1]{PartI}}]
  \label{d:1}
  A \emph{spectral decomposition system} for a Euclidean space $\FH$
  is a tuple
  $\mathfrak{S}=(\XX,\SD,\gamma,(\Lambda_{\ad})_{\ad\in\AD})$,
  where $\XX$ is a Euclidean space,
  $\SD$ is a group which acts on $\XX$ by linear isometries,
  $\gamma$ is a mapping from $\FH$ to $\XX$,
  and $(\Lambda_{\ad})_{\ad\in\AD}$ is a family of linear isometries
  from $\XX$ to $\FH$ such that the following are satisfied:
  \begin{enumerate}[label={\normalfont[\Alph*]}]
    \item
      \label{d:1b}
      There exists an $\SD$-invariant mapping $\tau\colon\XX\to\XX$ such
      that
      \begin{equation}
        \label{e:da2t}
        \brk[s]!{\,(\forall x\in\XX)\,\,
          \tau(x)\in\SD\cdot x\,}
        \quad\text{and}\quad
        \brk[s]!{\,(\forall\ad\in\AD)\,\,
          \gamma\circ\Lambda_{\ad}=\tau\,}.
      \end{equation}
    \item
      \label{d:1c}
      $(\forall X\in\FH)(\exi\ad\in\AD)$
      $X=\Lambda_{\ad}\gamma(X)$.
    \item
      \label{d:1d}
      $(\forall X\in\FH)(\forall Y\in\FH)$
      $\scal{X}{Y}\leq\scal{\gamma(X)}{\gamma(Y)}$.
  \end{enumerate}
  In which case:
  \begin{itemize}
    \item
      The mapping $\gamma$ is called the \emph{spectral mapping} of the
      system $\mathfrak{S}$.
    \item
      The mapping $\tau$ in property~\cref{d:1b} is called the
      \emph{spectral-induced ordering mapping} of the system
      $\mathfrak{S}$.
    \item
      We set
      \begin{equation}
        \label{e:ax}
        (\forall X\in\FH)\quad
        \AD_X=\menge{\ad\in\AD}{X=\Lambda_{\ad}\gamma(X)}.
      \end{equation}
      (Property~\cref{d:1c} guarantees that the sets
      $(\AD_X)_{X\in\FH}$ are nonempty.)
    \item
      Given $X\in\FH$, the vector $\gamma(X)$ is called the
      \emph{spectrum} of $X$ with respect to $\mathfrak{S}$ and, for
      every $\ad\in\AD_X$, the identity
      \begin{equation}
        X=\Lambda_{\ad}\gamma(X)
      \end{equation}
      is called a \emph{spectral decomposition} of $X$ with respect to
      $\mathfrak{S}$.
  \end{itemize}
\end{definition}

Our results will be formulated under the following assumption, which extends
the assumptions in \cite{PartI} with a closedness condition on
$\set{\Lambda_{\ad}}_{\ad\in\AD}$ that will be essential in several limiting
arguments in this paper. As we shall demonstrate in \cref{sec:22}, this
assumption is satisfied in all previously considered examples.

\begin{assumption}
  \label{a:1}
  $\FH$ is a Euclidean space and
  $\mathfrak{S}=(\XX,\SD,\gamma,(\Lambda_{\ad})_{\ad\in\AD})$ is a
  spectral decomposition system for $\FH$. Moreover, the set
  $\set{\Lambda_{\ad}}_{\ad\in\AD}$ is closed in $\mathscr{L}(\XX,\FH)$.
\end{assumption}

We now formally define the classes of functions and sets
that will be studied in this paper.

\begin{definition}[spectral function and spectral set]
  \label{d:2}
  Let $\FH$ be a Euclidean space, and let
  $(\XX,\SD,\gamma,(\Lambda_{\ad})_{\ad\in\AD})$ be a
  spectral decomposition system for $\FH$.
  \begin{enumerate}
    \item\label{d:2i}
      A function $\Phi\colon\FH\to\RXX$ is said to be a
      \emph{spectral function} if
      \begin{equation}
        (\forall X\in\FH)(\forall Y\in\FH)\quad
        \gamma(X)=\gamma(Y)
        \quad\Rightarrow\quad
        \Phi(X)=\Phi(Y).
      \end{equation}
    \item\label{d:2ii}
      A set $\DD\subset\FH$ is said to be a
      \emph{spectral set} if its indicator function
      $\iota_{\DD}$ is a spectral function or, equivalently,
      \begin{equation}
        (\forall X\in\FH)(\forall Y\in\FH)\quad
        \begin{cases}
          \gamma(X)=\gamma(Y)\\
          X\in\DD
        \end{cases}
        \quad\Rightarrow\quad
        Y\in\DD.
      \end{equation}
  \end{enumerate}
\end{definition}

The next result characterizes spectral functions precisely
as those of the form
\begin{equation}
  \Phi=\varphi\circ\gamma,
  \quad\text{where $\varphi\colon\XX\to\RXX$ is
    $\SD$-invariant},
\end{equation}
which thus motivates the study of
variational analytic properties and objects attached to
$\Phi$ in terms of those of $\varphi$.

\begin{proposition}[\protect{\cite[Proposition~4.1]{PartI}}]
  \label{p:85}
  Let $\FH$ be a Euclidean space, and let
  $(\XX,\SD,\gamma,(\Lambda_{\ad})_{\ad\in\AD})$ be a
  spectral decomposition system for $\FH$.
  Then a function $\Phi\colon\FH\to\RXX$ is a spectral function
  if and only if there exists an $\SD$-invariant function
  $\varphi\colon\XX\to\RXX$ such that
  $\Phi=\varphi\circ\gamma$,
  in which case, $\varphi$ is uniquely determined by
  \begin{equation}
    (\forall\ad\in\AD)\quad\varphi=\Phi\circ\Lambda_{\ad},
  \end{equation}
  and we call $\varphi$ the
  \emph{invariant function associated with $\Phi$}.
\end{proposition}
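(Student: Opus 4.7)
\textbf{Proof plan for \cref{p:85}.}
The plan is to prove the two implications separately, then establish uniqueness via the formula $\varphi=\Phi\circ\Lambda_{\ad}$.

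The easy direction is ``$\Leftarrow$'': given an $\SD$-invariant $\varphi\colon\XX\to\RXX$ with $\Phi=\varphi\circ\gamma$, whenever $\gamma(X)=\gamma(Y)$ one has $\Phi(X)=\varphi(\gamma(X))=\varphi(\gamma(Y))=\Phi(Y)$, so $\Phi$ is spectral. (The $\SD$-invariance of $\varphi$ is actually not needed for this direction; it is only needed for uniqueness.)

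For ``$\Rightarrow$'', I would fix some $\ad_0\in\AD$ and \emph{define} $\varphi\coloneq\Phi\circ\Lambda_{\ad_0}$, then verify in order: (i) the definition is independent of the choice of $\ad_0$, (ii) the resulting $\varphi$ is $\SD$-invariant, and (iii) $\Phi=\varphi\circ\gamma$. Step (i) uses property \cref{d:1b}: for $\ad_1,\ad_2\in\AD$ and $x\in\XX$, one has $\gamma(\Lambda_{\ad_1}x)=\tau(x)=\gamma(\Lambda_{\ad_2}x)$, so the spectral property of $\Phi$ forces $\Phi(\Lambda_{\ad_1}x)=\Phi(\Lambda_{\ad_2}x)$. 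Step (ii) uses the $\SD$-invariance of $\tau$ together with \cref{d:1b} again: for $\sd\in\SD$ and $x\in\XX$, $\gamma(\Lambda_{\ad_0}(\sd\cdot x))=\tau(\sd\cdot x)=\tau(x)=\gamma(\Lambda_{\ad_0}x)$, hence $\varphi(\sd\cdot x)=\varphi(x)$. Step (iii) uses property \cref{d:1c}: for $X\in\FH$, pick $\ad\in\AD_X$, so that $X=\Lambda_{\ad}\gamma(X)$; then by step~(i), $\Phi(X)=\Phi(\Lambda_{\ad}\gamma(X))=\Phi(\Lambda_{\ad_0}\gamma(X))=\varphi(\gamma(X))$.

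For uniqueness, suppose $\Phi=\varphi\circ\gamma$ with $\varphi$ being $\SD$-invariant. For any $\ad\in\AD$ and $x\in\XX$, $\Phi(\Lambda_{\ad}x)=\varphi(\gamma(\Lambda_{\ad}x))=\varphi(\tau(x))$, and since $\tau(x)\in\SD\cdot x$ by \cref{d:1b} and $\varphi$ is $\SD$-invariant, $\varphi(\tau(x))=\varphi(x)$, giving $\Phi\circ\Lambda_{\ad}=\varphi$. Note that the defining property \cref{d:1d} (the von Neumann-type trace inequality) and \cref{a:1} are not required here; the argument uses only the existence of the invariant ordering mapping $\tau$ and the surjectivity-type condition \cref{d:1c}. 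The only minor subtlety, and arguably the main ``obstacle'' in this otherwise formal argument, is keeping straight that $\tau$ a priori may differ from the identity on $\XX$, so that one cannot directly read off $\varphi=\Phi\circ\Lambda_{\ad}$ from $\Phi=\varphi\circ\gamma$ without invoking the $\SD$-invariance of $\varphi$ via the orbit relation $\tau(x)\in\SD\cdot x$.
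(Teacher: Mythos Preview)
Your proof is correct. Note, however, that the present paper does not actually prove \cref{p:85}: it is quoted verbatim from \cite[Proposition~4.1]{PartI} and stated without argument here, so there is no in-paper proof to compare against. That said, your argument is the natural one and almost certainly coincides with the proof in \cite{PartI}: the only ingredients are property~\cref{d:1b} (to get $\gamma\circ\Lambda_{\ad}=\tau$ and $\tau(x)\in\SD\cdot x$, plus the $\SD$-invariance of $\tau$) and property~\cref{d:1c} (to write any $X$ as $\Lambda_{\ad}\gamma(X)$), exactly as you use them. Your observation that neither property~\cref{d:1d} nor the closedness in \cref{a:1} is needed is also correct.
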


By specializing \cref{p:85} to indicator functions,
we obtain a characterization of spectral sets precisely
as those of the form
\begin{equation}
  \DD=\gamma^{-1}(D),
  \quad\text{where $D\subset\XX$ is $\SD$-invariant}.
\end{equation}

\begin{corollary}[\protect{\cite[Corollary~4.3]{PartI}}]
  Let $\FH$ be a Euclidean space, and let
  $(\XX,\SD,\gamma,(\Lambda_{\ad})_{\ad\in\AD})$ be a
  spectral decomposition system for $\FH$.
  Then a set $\DD\subset\FH$ is a spectral set if and only if
  there exists an $\SD$-invariant set $D\subset\XX$
  such that
  $\DD=\gamma^{-1}(D)$,
  in which case $D$ is uniquely determined by
  \begin{equation}
    (\forall\ad\in\AD)\quad
    D=\Lambda_{\ad}^{-1}(\DD),
  \end{equation}
  and we call $D$ the
  \emph{invariant set associated with $\DD$}.
\end{corollary}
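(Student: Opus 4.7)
The plan is to obtain this corollary as a direct specialization of \cref{p:85} to indicator functions, since by \cref{d:2}\cref{d:2ii} a set $\DD\subset\FH$ is spectral precisely when $\iota_{\DD}$ is a spectral function. The overall strategy is therefore to apply \cref{p:85} to $\Phi=\iota_{\DD}$, then verify that the resulting $\SD$-invariant function $\varphi\colon\XX\to\RXX$ is itself an indicator, and translate the identities $\Phi=\varphi\circ\gamma$ and $\varphi=\Phi\circ\Lambda_{\ad}$ into their set-theoretic counterparts.

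For the ``only if'' direction, I would start from a spectral set $\DD$, so that $\iota_{\DD}$ is a spectral function, and invoke \cref{p:85} to obtain a uniquely determined $\SD$-invariant $\varphi\colon\XX\to\RXX$ with $\iota_{\DD}=\varphi\circ\gamma$ and, for every $\ad\in\AD$, $\varphi=\iota_{\DD}\circ\Lambda_{\ad}$. The key observation is that the right-hand side takes only the values $0$ and $\pinf$, so $\varphi$ is the indicator of the set
\begin{equation}
  D=\varphi^{-1}(0)=\Lambda_{\ad}^{-1}(\DD),
\end{equation}
which is independent of $\ad$ and $\SD$-invariant because $\varphi$ is. From $\iota_{\DD}=\iota_D\circ\gamma=\iota_{\gamma^{-1}(D)}$ I then conclude $\DD=\gamma^{-1}(D)$, and uniqueness of $D$ is inherited from uniqueness of $\varphi$ in \cref{p:85}.

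For the ``if'' direction, given any $\SD$-invariant set $D\subset\XX$ with $\DD=\gamma^{-1}(D)$, the function $\iota_D\colon\XX\to\RXX$ is $\SD$-invariant and satisfies $\iota_{\DD}=\iota_D\circ\gamma$, so \cref{p:85} shows that $\iota_{\DD}$ is a spectral function, i.e., $\DD$ is a spectral set.

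I do not anticipate a significant obstacle here, since \cref{p:85} already handles the nontrivial content (existence, uniqueness, and the representation $\varphi=\Phi\circ\Lambda_{\ad}$). The only minor point to be careful about is justifying that an $\SD$-invariant $\varphi\colon\XX\to\RXX$ factoring an indicator through $\gamma$ must itself be an indicator; this is why I would pass through the identity $\varphi=\iota_{\DD}\circ\Lambda_{\ad}$ rather than reasoning directly from $\iota_{\DD}=\varphi\circ\gamma$, which would require surjectivity of $\gamma$ that is not part of \cref{d:1}.
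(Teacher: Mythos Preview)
Your proposal is correct and follows exactly the approach the paper indicates: the corollary is stated as a citation from \cite{PartI} with no proof given, and the surrounding text explicitly says it is obtained ``by specializing \cref{p:85} to indicator functions,'' which is precisely what you do. Your care in extracting that $\varphi$ is an indicator via $\varphi=\iota_{\DD}\circ\Lambda_{\ad}$ (rather than via $\iota_{\DD}=\varphi\circ\gamma$, which would need surjectivity of $\gamma$) is a nice touch that makes the argument fully rigorous.
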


\subsection{Examples}
\label{sec:22}

We now illustrate the versatility of spectral decomposition systems.
The examples here are mostly drawn from \cite{PartI},
where we first introduced this notion and to which we
refer the reader for additional background and details.
In addition, we verify that the additional closedness assumption required
in this work is satisfied for these examples.
To that end, let us introduce some notation.

\begin{notation}
  \label{n:1}
  Let $M$ and $N$ be strictly positive integers.
  \begin{itemize}
    \item
      $\KB$ denotes one of the following:
      the field $\RR$ of real numbers,
      the field $\CC$ of complex numbers,
      or the skew-field $\HB$ of Hamiltonian quaternions
      (we refer the reader to \cite{Rodman14} for background on
      quaternions).
    \item
      The canonical involution on $\KB$ is $\xi\mapsto\overline{\xi}$,
      which fixes only the elements of $\RR$.
    \item
      $\KB^{M\times N}$ stands for the real vector space of $M\times N$
      matrices with entries in $\KB$.
    \item
      The conjugate transpose of
      $X=\brk[s]{\xi_{ij}}_{\substack{1\leq i\leq M\\ 1\leq j\leq N}}
      \in\KB^{M\times N}$ is
      $X^*
      =\brk[s]{\overline{\xi_{ji}}}_{\substack{1\leq j\leq N\\ 1\leq i\leq M}}
      \in\KB^{N\times M}$.
    \item
      The trace of a matrix $X\in\KB^{N\times N}$ is denoted by $\tra X$.
    \item
      $\HS^N(\KB)=\menge{X\in\KB^{N\times N}}{X=X^*}$
      is the vector subspace of $\KB^{N\times N}$ which consists of
      Hermitian matrices.
    \item
      $\US^N(\KB)=\menge{U\in\KB^{N\times N}}{U^*U=\Id}$ is the
      multiplicative group of unitary matrices.
    \item
      $\SO^N=\menge{U\in\US^N(\RR)}{\det U=1}$ is the special orthogonal
      group.
    \item
      $\PS_{\pm}^N$ denotes the multiplicative group of
      $N\times N$ \emph{signed permutation matrices}, that is,
      matrices which contain exactly one nonzero entry in every row
      and every column, and that entry is $-1$ or $1$.
    \item
      $\PS^N$ is the group of permutation matrices, that is, the
      subgroup of $\PS_{\pm}^N$ which consists of
      matrices with entries in $\set{0,1}$.
    \item
      For every $x=(\xi_i)_{1\leq i\leq N}\in\RR^N$, $x^{\downarrow}$ denotes
      the rearrangement vector of $x$ with entries listed in decreasing order,
      and $\abs{x}^{\downarrow}$ denotes the rearrangement vector of
      $(\abs{\xi_i})_{1\leq i\leq N}$ with entries listed in decreasing
      order.
    \item
      $\OS(\HH)$ stands for the orthogonal group of a Euclidean space
      $\HH$, that is, the set of linear isometries from
      $\HH$ to $\HH$ equipped with the composition operation.
  \end{itemize}
\end{notation}

Our first example shows that every Euclidean space admits a spectral
decomposition system.

\begin{example}
  \label{ex:1}
  Let $2\leq N\in\NN$, set
  \begin{equation}
    e=(1,0,\ldots,0)\in\RR^N
    \quad\text{and}\quad
    \brk!{\forall U\in\US^N(\RR)}\,\,
    \Lambda_U\colon\RR\to\RR^N\colon
      \xi\mapsto\xi Ue,
  \end{equation}
  and let the multiplicative group $\set{-1,1}$ act on
  $\RR$ via multiplication.
  Then $(\RR,\set{-1,1},\norm{\Cdot}_2,(\Lambda_U)_{U\in\US^N(\RR)})$
  is a spectral decomposition system for $\RR^N$,
  and the set $\set{\Lambda_U}_{U\in\US^N(\RR)}$ is closed in
  $\mathscr{L}(\RR,\RR^N)$.
\end{example}
\begin{proof}
  The former assertion is proved in \cite[Example~2.3]{PartI},
  while the latter follows from the closedness of $\US^N(\RR)$
  in $\RR^{N\times N}$ for the topology of pointwise convergence.
\end{proof}

Our next example shows that the normal decomposition system framework of
\cite{Lewis96b} can be viewed as a spectral decomposition system.
This subsumes, in particular, the Lie-theoretic framework of
\cite{Berezin56,Lewis00,Tam98}.

\begin{example}[normal decomposition system]
  \label{ex:2}
  Let $(\FH,\GS,\gamma)$ be a \emph{normal decomposition system} in the
  sense of \cite[Definition~2.1]{Lewis96b}, that is,
  $\FH$ is a Euclidean space,
  $\GS$ is a closed subgroup of $\OS(\FH)$,
  and $\gamma\colon\FH\to\FH$ is a $\GS$-invariant mapping
  (here $\GS$ acts on $\FH$ via the canonical action
  $(\gs,X)\mapsto\gs(X)$) such that
  \begin{equation}
    \begin{cases}
      (\forall X\in\FH)(\exi\gs\in\GS)\quad X=\gs\brk!{\gamma(X)}\\
      (\forall X\in\FH)(\forall Y\in\FH)\quad
      \scal{X}{Y}\leq\scal{\gamma(X)}{\gamma(Y)}.
    \end{cases}
  \end{equation}
  Let $\XD$ be a vector subspace of $\FH$ which contains $\ran\gamma$,
  and set
  \begin{equation}
    \SD=\menge{\restr{\gs}{\XD}}{\gs\in\GS\,\,\text{such that}\,\,\gs(\XD)=\XD}
    \quad\text{and}\quad
    (\forall\gs\in\GS)\,\,
    \Lambda_{\gs}\colon\XD\to\FH\colon
    X\mapsto\gs(X).
  \end{equation}
  Moreover, let $\SD$ act on $\XD$ via $(\sd,X)\mapsto\sd(X)$.
  Suppose that
  \begin{equation}
    (\forall X\in\XD)(\exi\sd\in\SD)\quad
    X=\sd\brk!{\gamma(X)}.
  \end{equation}
  Then $\mathfrak{S}=(\XD,\SD,\gamma,(\Lambda_{\gs})_{\gs\in\GS})$
  is a spectral decomposition system for $\FH$,
  and the set $\set{\Lambda_{\gs}}_{\gs\in\GS}$ is closed in
  $\mathscr{L}(\XD,\FH)$.
\end{example}
\begin{proof}
  For the former assertion, note that property~\cref{d:1b} in \cref{d:1}
  is fulfilled with $\tau=\restr{\gamma}{\XD}$.
  The later assertion follows from the closedness of $\GS$ in $\OS(\FH)$.
\end{proof}

A framework equivalent to normal decomposition systems is
the notion of an Eaton triple \cite{Eaton87}, which arose in probability
theory and group majorization.

\begin{example}[Eaton triple]
  \label{ex:1+}
  Let $(\FH,\GS,\KD)$ be an \emph{Eaton triple}
  \cite[Chapter~6]{Eaton87}, that is,
  $\FH$ is a Euclidean space,
  $\GS$ is a closed subgroup of $\OS(\FH)$,
  and $\KD$ is a closed convex cone in $\FH$ such that
  \begin{equation}
    \label{e:eaton1}
    (\forall X\in\FH)(\exi\gs\in\GS)\quad
    \gs(X)\in\KD,
  \end{equation}
  and
  \begin{equation}
    \label{e:eaton2}
    (\forall X\in\KD)(\forall Y\in\KD)\quad
    \max_{\gs\in\GS}\scal{X}{\gs(Y)}
    =\scal{X}{Y}.
  \end{equation}
  As shown on \cite[p.~14]{Niezgoda98}, for every $X\in\FH$,
  the intersection $\KD\cap(\GS\cdot X)$ is a singleton,
  which we denote by $\gamma(X)$.
  This thus defines a $\GS$-invariant mapping
  \begin{equation}
    \gamma\colon\FH\to\FH,
  \end{equation}
  where $\GS$ acts on $\FH$ via $(\gs,X)\mapsto\gs(X)$.
  Now let $\XD$ be a vector subspace of $\FH$ which contains
  $\ran\gamma$, set
  \begin{equation}
    \SD=\menge{\restr{\gs}{\XD}}{\gs\in\GS\,\,\text{such that}\,\,\gs(\XD)=\XD}
    \quad\text{and}\quad
    (\forall\gs\in\GS)\,\,
    \Lambda_{\gs}\colon\XD\to\FH\colon
    X\mapsto\gs(X),
  \end{equation}
  and let $\SD$ act on $\XD$ via $(\sd,X)\mapsto\sd(X)$.
  Suppose that
  \begin{equation}
    \label{e:eaton3}
    (\forall X\in\XD)(\exi\sd\in\SD)\quad
    X=\sd\brk!{\gamma(X)}.
  \end{equation}
  Then $\mathfrak{S}=(\XD,\SD,\gamma,(\Lambda_{\gs})_{\gs\in\GS})$
  is a spectral decomposition system for $\FH$,
  and the set $\set{\Lambda_{\gs}}_{\gs\in\GS}$ is closed in
  $\mathscr{L}(\XD,\FH)$.
\end{example}
\begin{proof}
  We claim that $(\FH,\GS,\gamma)$ is a normal decomposition system.
  To this end, it is enough to verify the inequality
  \begin{equation}
    (\forall X\in\FH)(\forall Y\in\FH)\quad
    \scal{X}{Y}\leq\scal{\gamma(X)}{\gamma(Y)}.
  \end{equation}
  Take $X$ and $Y$ in $\FH$, and let $\gs$ and $\hs$ be in $\GS$
  such that $\gamma(X)=\gs(X)$ and $\gamma(Y)=\hs(Y)$.
  Since $\set{\gamma(X),\gamma(Y)}\subset\KD$ by construction,
  we derive from \cref{e:eaton2} that
  \begin{equation}
    \scal{\gamma(X)}{\gamma(Y)}
    \geq\scal!{\gamma(X)}{(\gs^{-1}\circ\hs)\brk!{\gamma(Y)}}
    =\scal!{\gs\brk!{\gamma(X)}}{\hs\brk!{\gamma(Y)}}
    =\scal{X}{Y}.
  \end{equation}
  Consequently, we obtain the conclusion by invoking \cref{ex:2}.
\end{proof}

The next example demonstrates that our notion of a spectral
decomposition system encompasses the Euclidean Jordan algebra
framework of \cite{Baes07,Jeong20,Lourenco20,Sun08},
which in turn captures the space $\HS^N(\KB)$ of Hermitian
matrices (see \cref{ex:4}).
It was shown in \cite{Orlitzky22} that, in general, Euclidean Jordan
algebras cannot be embedded into a normal decomposition system of
\cref{ex:2}, demonstrating that these are distinct notions.

\begin{example}[Euclidean Jordan algebra]
  \label{ex:3}
  Let $\FH$ be a \emph{Euclidean Jordan algebra}, that is, $\FH$ is a
  finite-dimensional real vector
  space which is endowed with a bilinear form
  \begin{equation}
    \label{e:bform}
    \FH\times\FH\to\FH\colon
    (X,Y)\mapsto X\circledast Y
  \end{equation}
  such that the following are satisfied:
  \begin{enumerate}[label={\normalfont[\Alph*]}]
    \item
      $(\forall X\in\FH)(\forall Y\in\FH)$
      $X\circledast Y=Y\circledast X$ and
      $X\circledast((X\circledast X)\circledast Y)
      =(X\circledast X)\circledast(X\circledast Y)$.
    \item
      There exists a scalar product
      $\killing{\Cdot}{\Cdot}$ on $\FH$ such that
      $(\forall X\in\FH)(\forall Y\in\FH)(\forall Z\in\FH)$
      $\killing{X\circledast Y}{Z}=\killing{X}{Y\circledast Z}$;
  \end{enumerate}
  see \cite{Faraut94} for background and complements on
  Euclidean Jordan algebras.
  We equip $\FH$ with the scalar product
  \begin{equation}
    \label{e:wcyk}
    (\forall X\in\FH)(\forall Y\in\FH)\quad
    \scal{X}{Y}=\Tra(X\circledast Y),
  \end{equation}
  where $\Tra X$ is the trace in $\FH$ of an element $X\in\FH$
  (see \cite[Section~II.2]{Faraut94}).
  Denote by $E$ the identity element of $\FH$ and by $N$ the
  rank of $\FH$, and let $\PS^N$ act on $\RR^N$ via matrix-vector
  multiplication. Next, a \emph{Jordan frame} of $\FH$ is a
  family $(A_i)_{1\leq i\leq N}$ in $\FH^N\smallsetminus\set{0}$ such
  that
  \begin{equation}
    \label{e:y7by}
    \begin{cases}
      \brk!{\forall i\in\set{1,\ldots,N}}
      \brk!{\forall j\in\set{1,\ldots,N}}\quad
      A_i\circledast A_j=
      \begin{cases}
        A_i&\text{if}\,\,i=j,\\
        0&\text{if}\,\,i\neq j
      \end{cases}\\
      \Sum_{i=1}^NA_i=E.
    \end{cases}
  \end{equation}
  The spectral decomposition theorem for Euclidean Jordan algebras
  \cite[Theorem~III.1.2]{Faraut94} states that,
  for every $X\in\FH$, there exist a
  unique vector $(\lambda_1(X),\ldots,\lambda_N(X))\in\RR^N$, the
  entries of which are called the \emph{eigenvalues} of $X$,
  and a Jordan frame $(A_i)_{1\leq i\leq N}$ such that
  \begin{equation}
    \lambda_1(X)\geq\cdots\geq\lambda_N(X)
    \quad\text{and}\quad
    X=\sum_{i=1}^N\lambda_i(X)A_i.
  \end{equation}
  We thus obtain a mapping
  \begin{equation}
    \label{e:9vmn}
    \lambda\colon\FH\to\RR^N\colon
    X\mapsto\brk!{\lambda_1(X),\ldots,\lambda_N(X)}.
  \end{equation}
  Further, denote by $\AD$ the set of Jordan frames of $\FH$ and set
  \begin{equation}
    \label{e:cwpw}
    \brk!{\forall\ad=(A_i)_{1\leq i\leq N}\in\AD}\quad
    \Lambda_{\ad}\colon\RR^N\to\FH\colon
    x=(\xi_i)_{1\leq i\leq N}\mapsto\sum_{i=1}^N\xi_iA_i.
  \end{equation}
  Then $\mathfrak{S}=(\RR^N,\PS^N,\lambda,(\Lambda_{\ad})_{\ad\in\AD})$
  is a spectral decomposition system for $\FH$,
  and the set $\set{\Lambda_{\ad}}_{\ad\in\AD}$ is closed in
  $\mathscr{L}(\RR^N,\FH)$.
\end{example}
\begin{proof}
  The first assertion is proved in \cite[Example~2.5]{PartI}.
  To establish the closedness of
  $\set{\Lambda_{\ad}}_{\ad\in\AD}$ in $\mathscr{L}(\RR^N,\FH)$,
  let $(\ad_n)_{n\in\NN}$ be a sequence of Jordan frames
  of $\FH$ such that $(\Lambda_{\ad_n})_{n\in\NN}$ converges
  pointwise to some $L\in\mathscr{L}(\RR^N,\FH)$.
  We must show that there exists a Jordan frame $\ad\in\AD$ such that
  $L=\Lambda_{\ad}$.
  To do so, denote by $(e_i)_{1\leq i\leq N}$ the canonical basis of
  $\RR^N$, set
  \begin{equation}
    \brk!{\forall i\in\set{1,\ldots,N}}\quad
    A_i=Le_i.
  \end{equation}
  Additionally, for every $n\in\NN$, we write
  $\ad_n=(A_{i,n})_{1\leq i\leq N}$.
  We deduce from \cref{e:cwpw} that
  \begin{equation}
    \brk!{\forall i\in\set{1,\ldots,N}}\quad
    A_{i,n}
    =\Lambda_{\ad_n}e_i
    \to Le_i
    =A_i.
  \end{equation}
  On the other hand, for every $n\in\NN$, since
  $(A_{i,n})_{1\leq i\leq N}$ is a Jordan frame of $\FH$, we have
  \begin{equation}
    \begin{cases}
      \brk!{\forall i\in\set{1,\ldots,N}}
      \brk!{\forall j\in\set{1,\ldots,N}}\quad
      A_{i,n}\circledast A_{j,n}=
      \begin{cases}
        A_{i,n}&\text{if}\,\,i=j,\\
        0&\text{if}\,\,i\neq j
      \end{cases}\\
      \Sum_{i=1}^NA_{i,n}=E.
    \end{cases}
  \end{equation}
  Hence, since the bilinear form \cref{e:bform} is continuous (note that
  $\FH$ is finite-dimensional), letting $n\to\pinf$ shows that
  $(A_i)_{1\leq i\leq N}$ satisfies \cref{e:y7by} and it is therefore a
  Jordan frame of $\FH$.
  Furthermore, we derive from the linearity of $L$ that
  \begin{equation}
    \brk!{\forall x=(\xi_i)_{1\leq i\leq N}\in\RR^N}\quad
    Lx
    =L\brk3{\sum_{i=1}^N\xi_ie_i}
    =\sum_{i=1}^N\xi_iLe_i
    =\sum_{i=1}^N\xi_iA_i.
  \end{equation}
  Consequently, upon setting $\ad=(A_i)_{1\leq i\leq N}\in\AD$, we
  conclude that $L=\Lambda_{\ad}$.
\end{proof}

Specializing \cref{ex:3} to the case of Hermitian matrices (see
\cite[Section~V.2]{Faraut94}) yields at once the following example.

\begin{example}[eigenvalue decomposition]
  \label{ex:4}
  Let $2\leq N\in\NN$. We equip $\HS^N(\KB)$ with the scalar product
  \begin{equation}
    \scal{\Cdot}{\Cdot}\colon
    (X,Y)\mapsto\Re\tra(XY)
  \end{equation}
  and let $\PS^N$ act on $\RR^N$ via matrix-vector multiplication.
  For every $X\in\HS^N(\KB)$, we denote by
  $\lambda(X)=(\lambda_1(X),\ldots,\lambda_N(X))$ the vector of the
  $N$ (not necessarily distinct) eigenvalues of $X$ listed in decreasing order
  (see \cite[Theorem~5.3.6(c)]{Rodman14} for the quaternion case).
  Additionally, set
  \begin{equation}
    \brk!{\forall U\in\US^N(\KB)}\quad
    \Lambda_U\colon\RR^N\to\HS^N(\KB)\colon
    x\mapsto U(\Diag x)U^*.
  \end{equation}
  Then
  $\mathfrak{S}=(\RR^N,\PS^N,\lambda,(\Lambda_U)_{U\in\US^N(\KB)})$
  is a spectral decomposition system for $\HS^N(\KB)$,
  and the set $\set{\Lambda_U}_{U\in\US^N(\KB)}$ is closed in
  $\mathscr{L}(\RR^N,\HS^N(\KB))$.
\end{example}

The next example concerns singular values of rectangular matrices.

\begin{example}[singular value decomposition]
  \label{ex:5}
  Let $M$ and $N$ be strictly positive integers
  and set $m=\min\set{M,N}$.
  Let $\FH$ be the Euclidean space obtained by equipping $\KB^{M\times N}$ with the scalar product
  \begin{equation}
    \label{e:gx9e}
    (X,Y)\mapsto\Re\tra(X^*Y),
  \end{equation}
  and let $\PS_{\pm}^m$ act on $\RR^m$ via matrix-vector multiplication.
  Given a matrix $X\in\FH$, the vector in
  $\RP^m$ formed by the $m$ (not necessarily distinct) singular values of $X$,
  with the convention that they are listed in decreasing order, is denoted by
  $(\sigma_1(X),\ldots,\sigma_m(X))$; see
  \cite[Proposition~3.2.5(f)]{Rodman14} for
  singular value decomposition of matrices in $\HB^{M\times N}$. This thus
  defines a mapping
  \begin{equation}
    \sigma\colon\FH\to\RR^m\colon
    X\mapsto\brk!{\sigma_1(X),\ldots,\sigma_m(X)}.
  \end{equation}
  Further, set $\AD=\US^M(\KB)\times\US^N(\KB)$ and
  \begin{equation}
    \label{e:4l5f}
    \brk!{\forall\ad=(U,V)\in\AD}\quad
    \Lambda_{\ad}\colon\RR^m\to\FH\colon
    x\mapsto U(\Diag x)V^*,
  \end{equation}
  where the operator $\Diag\colon\RR^m\to\FH$ maps a vector
  $(\xi_i)_{1\leq i\leq m}$ to the diagonal matrix in $\FH$ of which the
  diagonal entries are $\xi_1,\ldots,\xi_m$. Then
  $\mathfrak{S}=(\RR^m,\PS_{\pm}^m,\sigma,(\Lambda_{\ad})_{\ad\in\AD})$
  is a spectral decomposition system for $\FH$,
  and the set $\set{\Lambda_{\ad}}_{\ad\in\AD}$ is closed in
  $\mathscr{L}(\RR^m,\FH)$.
\end{example}
\begin{proof}
  The former assertion is established in \cite[Example~2.7]{PartI},
  while the latter follows from the closedness of
  $\US^m(\KB)$ in $\KB^{m\times m}$.
\end{proof}

Our next example is a framework that arose in the study of isotropic
stored energy functions in nonlinear elasticity
\cite{Rosakis98} and the study of existence of a matrix with
prescribed singular values and main diagonal elements \cite{Thompson77}.
Several convex analysis results in this setting were established in
\cite{Dacorogna07}.

\begin{example}[signed singular value decomposition]
  \label{ex:6}
  Let $2\leq N\in\NN$ and let $\FH$ be the Euclidean space obtained by
  equipping $\RR^{N\times N}$ with the scalar product
  \begin{equation}
    (X,Y)\mapsto\tra(X\tran Y),
  \end{equation}
  let $\SD$ be the subgroup of $\PS_{\pm}^N$ which consists of all matrices
  with an even number of entries equal to $-1$, and let $\SD$ act on $\RR^N$
  via matrix-vector multiplication. As in \cref{ex:5},
  $\sigma(X)=(\sigma_1(X),\ldots,\sigma_N(X))$ designates the vector of the
  $N$ singular values of a matrix $X\in\FH$, with the convention that
  $\sigma_1(X)\geq\cdots\geq\sigma_N(X)$. Define a mapping
  \begin{equation}
    \gamma\colon\FH\to\RR^N\colon
    X\mapsto\brk!{\gamma_1(X),\ldots,\gamma_N(X)}
  \end{equation}
  by
  \begin{equation}
    (\forall X\in\FH)
    \brk!{\forall i\in\set{1,\ldots,N}}\quad
    \gamma_i(X)=
    \begin{cases}
      \sigma_i(X),&\text{if}\,\,1\leq i\leq N-1;\\
      \sigma_N(X)\sign(\det X),&\text{if}\,\,i=N.
    \end{cases}
  \end{equation}
  Finally, set $\AD=\SO^N\times\SO^N$ and
  \begin{equation}
    \brk!{\forall\ad=(U,V)\in\AD}\quad
    \Lambda_{\ad}\colon\RR^N\to\FH\colon
    x\mapsto U(\Diag x)V\tran.
  \end{equation}
  Then $\mathfrak{S}=(\RR^N,\SD,\gamma,(\Lambda_{\ad})_{\ad\in\AD})$
  is a spectral decomposition system for $\FH$,
  and the set $\set{\Lambda_{\ad}}_{\ad\in\AD}$
  is closed in $\mathscr{L}(\RR^N,\FH)$.
\end{example}
\begin{proof}
  It was shown in \cite[Example~2.8]{PartI} that
  $\mathfrak{S}$ is a spectral decomposition system for $\FH$.
  Furthermore, the closedness of
  $\set{\Lambda_{\ad}}_{\ad\in\AD}$ follows from that of
  $\SO^N$ in $\RR^{N\times N}$.
\end{proof}

The final example concerns spectral decomposition systems of product
spaces and is instrumental to the development of \cref{sec:4}.

\begin{example}
  \label{ex:7}
  Suppose that \cref{a:1} is in force.
  Let $\SD$ act on the direct sum $\XX\oplus\RR$ via
  \begin{equation}
    \brk!{\sd,(x,\xi)}\mapsto(\sd\cdot x,\xi),
  \end{equation}
  and define
  \begin{equation}
    \begin{cases}
      \boldsymbol{\gamma}\colon\FH\oplus\RR\to\XX\oplus\RR\colon
      (X,\xi)\mapsto\brk!{\gamma(X),\xi}\\
      (\forall\ad\in\AD)\,\,
      \boldsymbol{\Lambda}_{\ad}\colon\XX\oplus\RR\to\FH\oplus\RR\colon
      (x,\xi)\mapsto(\Lambda_{\ad}x,\xi).
    \end{cases}
  \end{equation}
  Then $(\XX\oplus\RR,\SD,\boldsymbol{\gamma},
  (\boldsymbol{\Lambda}_{\ad})_{\ad\in\AD})$
  is a spectral decomposition system for $\FH\oplus\RR$,
  and the set $\set{\boldsymbol{\Lambda}_{\ad}}_{\ad\in\AD}$
  is closed in $\mathscr{L}(\XX\oplus\RR,\FH\oplus\RR)$.
\end{example}

\subsection{Fundamental properties}

We now gather several essential properties of spectral
decomposition systems that will be used throughout the paper.

\begin{proposition}
  \label{p:8}
  Suppose that \cref{a:1} is in force.
  Then the following hold:
  \begin{enumerate}
    \item\label{p:8i}
      $\gamma$ is nonexpansive, that is,
      $(\forall X\in\FH)(\forall Y\in\FH)$
      $\norm{\gamma(X)-\gamma(Y)}\leq\norm{X-Y}$.
    \item\label{p:8ii}
      A function $\varphi\colon\XX\to\RXX$ is $\SD$-invariant if and
      only if $(\forall\ad\in\AD)$
      $\varphi\circ\gamma\circ\Lambda_{\ad}=\varphi$.
  \end{enumerate}
\end{proposition}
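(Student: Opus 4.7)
For part~\cref{p:8i}, I would combine the inequality in property~\cref{d:1d} of \cref{d:1} with the isometric character of the operators $\Lambda_{\ad}$. Given $X\in\FH$, picking $\ad\in\AD_X$ and using that $\Lambda_{\ad}$ is a linear isometry, the identity $X=\Lambda_{\ad}\gamma(X)$ from property~\cref{d:1c} immediately yields $\norm{X}=\norm{\gamma(X)}$. Therefore, for arbitrary $X,Y\in\FH$, expanding the squared norms and invoking the inequality $\scal{X}{Y}\leq\scal{\gamma(X)}{\gamma(Y)}$ gives
\begin{equation}
  \norm{X-Y}^2
  =\norm{X}^2-2\scal{X}{Y}+\norm{Y}^2
  \geq\norm{\gamma(X)}^2-2\scal{\gamma(X)}{\gamma(Y)}+\norm{\gamma(Y)}^2
  =\norm{\gamma(X)-\gamma(Y)}^2,
\end{equation}
from which the claimed nonexpansiveness follows.

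For part~\cref{p:8ii}, the crucial tool is the identity $\gamma\circ\Lambda_{\ad}=\tau$ supplied by property~\cref{d:1b}, together with the two features of $\tau$ stated there: $\SD$-invariance (that is, $\tau(\sd\cdot x)=\tau(x)$ for all $\sd\in\SD$ and $x\in\XX$) and $\tau(x)\in\SD\cdot x$. For the forward direction, if $\varphi$ is $\SD$-invariant, then for every $\ad\in\AD$ and $x\in\XX$, choosing $\sd\in\SD$ with $\tau(x)=\sd\cdot x$ yields
\begin{equation}
  \varphi\brk!{\gamma(\Lambda_{\ad}x)}
  =\varphi\brk!{\tau(x)}
  =\varphi(\sd\cdot x)
  =\varphi(x).
\end{equation}
For the reverse direction, fix $\ad\in\AD$ and assume $\varphi=\varphi\circ\gamma\circ\Lambda_{\ad}=\varphi\circ\tau$; then, for every $\sd\in\SD$ and $x\in\XX$, the $\SD$-invariance of $\tau$ delivers
\begin{equation}
  \varphi(\sd\cdot x)
  =\varphi\brk!{\tau(\sd\cdot x)}
  =\varphi\brk!{\tau(x)}
  =\varphi(x).
\end{equation}

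Both assertions are thus direct consequences of the compatibility conditions in \cref{d:1}, so no substantial obstacle is anticipated; the only care needed is bookkeeping of which property of \cref{d:1} is invoked at each step. It is worth observing that the closedness requirement in \cref{a:1} plays no role in either part and will only be used in the limiting arguments of subsequent sections.
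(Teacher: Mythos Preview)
Your argument is correct in both parts and is precisely the natural route: part~\cref{p:8i} follows from $\norm{X}=\norm{\gamma(X)}$ together with property~\cref{d:1d}, and part~\cref{p:8ii} from $\gamma\circ\Lambda_{\ad}=\tau$ together with the two defining features of $\tau$ in property~\cref{d:1b}. The paper itself does not spell out a proof here but simply cites \cite[Propositions~3.5\,(iv) and 4.4\,(i)]{PartI}; your direct derivation is exactly what one would expect those cited results to contain, and your observation that the closedness part of \cref{a:1} is not used is accurate.
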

\begin{proof}
  See \cite[Propositions~3.5\,(iv) and 4.4\,(i)]{PartI}, respectively.
\end{proof}

Given a Euclidean space $\HH$ and a set $D\subset\HH$,
the closure of $D$ is denoted by $\overline{D}$,
the distance function to $D$ is
\begin{equation}
  d_D\colon\HH\to\RR\colon
  x\mapsto\inf_{y\in D}\norm{x-y},
\end{equation}
and the projector onto $D$ is
\begin{equation}
  \Proj_D\colon\HH\to 2^{\HH}\colon
  x\mapsto\menge{y\in D}{\norm{x-y}=d_D(x)},
\end{equation}
where $2^{\HH}$ designates the power set of $\HH$.

\begin{proposition}
  \label{p:1}
  Suppose that \cref{a:1} is in force and let $D$ be a nonempty
  $\SD$-invariant subset of $\XX$. Then the following hold:
  \begin{enumerate}
    \item\label{p:1i}
      $\overline{\gamma^{-1}(D)}=\gamma^{-1}(\overline{D})$.
    \item\label{p:1ii}
      $d_D$ is $\SD$-invariant
      and $(\forall\ad\in\AD)$
      $d_D\circ\gamma\circ\Lambda_{\ad}=d_D$.
    \item\label{p:1iii}
      $d_{\gamma^{-1}(D)}=d_D\circ\gamma$.
    \item\label{p:1iv}
      For every $X\in\FH$ and every $Z\in\FH$,
      $Z\in\Proj_{\gamma^{-1}(D)}X$
      if and only if $\gamma(Z)\in\Proj_D\gamma(X)$
      and there exists $\ad\in\AD$ such that
      $X=\Lambda_{\ad}\gamma(X)$ and $Z=\Lambda_{\ad}\gamma(Z)$.
    \item\label{p:1v}
      For every $X\in\FH$,
      $\Proj_{\gamma^{-1}(D)}X
      =\menge{\Lambda_{\ad}z}{z\in\Proj_D\gamma(X)
        \,\,\text{and}\,\,\ad\in\AD_X}$.
\end{enumerate}
\end{proposition}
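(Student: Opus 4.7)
Parts \ref{p:1i}--\ref{p:1iii} are geometric consequences of the continuity of $\gamma$ and of the $\Lambda_{\ad}$'s together with the identity $\gamma\circ\Lambda_{\ad}=\tau$ from \cref{d:1b}, while the core of the proposition is \ref{p:1iv}; part \ref{p:1v} will then follow as a reformulation of \ref{p:1iv}. The plan is to prove them in order.

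For \ref{p:1i}, the inclusion $\overline{\gamma^{-1}(D)}\subset\gamma^{-1}(\overline{D})$ is immediate from the continuity of $\gamma$ furnished by \cref{p:8i}. For the converse, given $X\in\FH$ with $\gamma(X)\in\overline{D}$, I would pick a sequence $(x_n)_{n\in\NN}$ in $D$ with $x_n\to\gamma(X)$ and some $\ad\in\AD_X$ (available by \cref{d:1c}), then set $X_n=\Lambda_{\ad}x_n$. Property \cref{d:1b} yields $\gamma(X_n)=\tau(x_n)\in\SD\cdot x_n\subset D$ by the $\SD$-invariance of $D$, while the continuity of $\Lambda_{\ad}$ gives $X_n\to\Lambda_{\ad}\gamma(X)=X$. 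For \ref{p:1ii}, the $\SD$-invariance of $d_D$ is direct since $\SD$ acts on $D$ by linear isometries, and the identity $d_D\circ\gamma\circ\Lambda_{\ad}=d_D$ is then a direct consequence of \cref{p:8ii}.

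For \ref{p:1iii}, the inequality $d_{\gamma^{-1}(D)}(X)\leq d_D(\gamma(X))$ follows by isometric transport: for any $y\in D$ and $\ad\in\AD_X$, the argument used in \ref{p:1i} shows $\Lambda_{\ad}y\in\gamma^{-1}(D)$, while $\norm{X-\Lambda_{\ad}y}=\norm{\Lambda_{\ad}(\gamma(X)-y)}=\norm{\gamma(X)-y}$ since $\Lambda_{\ad}$ is a linear isometry; infimizing over $y\in D$ gives the claim. The reverse inequality follows from the nonexpansiveness of $\gamma$: for any $Y\in\gamma^{-1}(D)$, $\norm{X-Y}\geq\norm{\gamma(X)-\gamma(Y)}\geq d_D(\gamma(X))$.

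Part \ref{p:1iv} is the heart of the argument and where the main obstacle lies. The $\Leftarrow$ direction is routine: under the stated data, $Z\in\gamma^{-1}(D)$ and $\norm{X-Z}=\norm{\Lambda_{\ad}(\gamma(X)-\gamma(Z))}=\norm{\gamma(X)-\gamma(Z)}=d_D(\gamma(X))=d_{\gamma^{-1}(D)}(X)$ by \ref{p:1iii}. For $\Rightarrow$, combining $Z\in\Proj_{\gamma^{-1}(D)}X$ with \ref{p:1iii} and the nonexpansiveness of $\gamma$ yields the chain $d_D(\gamma(X))\leq\norm{\gamma(X)-\gamma(Z)}\leq\norm{X-Z}=d_D(\gamma(X))$, which forces $\gamma(Z)\in\Proj_D\gamma(X)$ and $\norm{X-Z}=\norm{\gamma(X)-\gamma(Z)}$. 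Expanding this last equality and using that any $\Lambda_{\ad}$ is a linear isometry (so $\norm{X}=\norm{\gamma(X)}$ and $\norm{Z}=\norm{\gamma(Z)}$), one obtains $\scal{X}{Z}=\scal{\gamma(X)}{\gamma(Z)}$, i.e., equality in the trace inequality \cref{d:1d}. The decisive and nontrivial step is then to extract a \emph{single} $\ad\in\AD$ that simultaneously decomposes both $X$ and $Z$: this is the simultaneous spectral decomposition principle from \cite{PartI}, whose proof relies on a compactness extraction that is underwritten precisely by the closedness of $\set{\Lambda_{\ad}}_{\ad\in\AD}$ assumed in \cref{a:1}. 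Finally, \ref{p:1v} repackages \ref{p:1iv}: the inclusion $\subset$ unpacks the $\Rightarrow$ direction by setting $z:=\gamma(Z)$ (noting that $X=\Lambda_{\ad}\gamma(X)$ forces $\ad\in\AD_X$), while $\supset$ follows from a direct isometric transport: for $z\in\Proj_D\gamma(X)$ and $\ad\in\AD_X$, setting $Z:=\Lambda_{\ad}z$ yields $\gamma(Z)=\tau(z)\in D$ by the $\SD$-invariance of $D$, and $\norm{X-Z}=\norm{\gamma(X)-z}=d_D(\gamma(X))=d_{\gamma^{-1}(D)}(X)$ by \ref{p:1iii}.
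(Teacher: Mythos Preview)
Your argument is correct and in fact more self-contained than the paper's own proof, which defers almost everything to \cite{PartI}: item~\cref{p:1i} is quoted as \cite[Corollary~4.8\,(ii)]{PartI}, item~\cref{p:1ii} is proved exactly as you do, and items~\cref{p:1iii}--\cref{p:1v} are obtained by specializing \cite[Corollary~6.4]{PartI} (a result on Bregman-type distances and projections relative to an $\SD$-invariant Legendre function $\psi$) to the choice $\psi=\tfrac12\norm{\Cdot}_{\XX}^2$. Your direct route --- nonexpansiveness of $\gamma$ for one inequality in \cref{p:1iii}, isometric transport along a fixed $\ad\in\AD_X$ for the other, and then reducing the forward implication in \cref{p:1iv} to the equality case $\scal{X}{Z}=\scal{\gamma(X)}{\gamma(Z)}$ of property~\cref{d:1d} --- is precisely what the Bregman result of \cite{PartI} collapses to for the squared norm, so the two arguments are essentially the same once unpacked; yours just avoids the detour through the more general machinery.

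One small correction: your parenthetical claim that the simultaneous spectral decomposition step ``relies on a compactness extraction underwritten precisely by the closedness of $\set{\Lambda_{\ad}}_{\ad\in\AD}$ assumed in \cref{a:1}'' is not right. That equality-case result already lives in \cite{PartI}, where the closedness hypothesis is \emph{not} imposed --- the present paper explicitly says that \cref{a:1} ``extends the assumptions in \cite{PartI} with a closedness condition''. Accordingly, none of \cref{p:1i}--\cref{p:1v} actually uses the closedness part of \cref{a:1}; that hypothesis only becomes active later, in the limiting arguments of \cref{p:5}, \cref{p:54}, and \cref{sec:3}.
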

\begin{proof}
  \cref{p:1i}:
  This is \cite[Corollary~4.8\,(ii)]{PartI}.

  \cref{p:1ii}:
  For every $x\in\XX$ and every $\sd\in\SD$,
  since $y\mapsto\sd^{-1}\cdot y$ is an isometry and $D$ is
  $\SD$-invariant,
  \begin{equation}
    d_D(\sd\cdot x)
    =\inf_{y\in D}\norm{\sd\cdot x-y}
    =\inf_{y\in D}\norm{x-\sd^{-1}\cdot y}
    =\inf_{y\in D}\norm{x-y}
    =d_D(x).
  \end{equation}
  Thus $d_D$ is $\SD$-invariant.
  The latter claim follows from \cref{p:8}\,\cref{p:8ii}.

  \cref{p:1iii,p:1iv,p:1v}:
  It results from \cite[Proposition~3.5\,(iii)]{PartI} that
  \begin{equation}
    \brk3{\frac{\norm{\Cdot}_{\XX}^2}{2}}\circ\gamma
    =\frac{\norm{\Cdot}_{\FH}^2}{2}.
  \end{equation}
  Therefore, these claims follow respectively from items (i), (ii), and
  (iii) of \cite[Corollary~6.4]{PartI} applied to the $\SD$-invariant
  Legendre function $\psi=\tfrac{1}{2}\norm{\Cdot}_{\XX}^2$
  (note that $\SD$ acts on $\XX$ by linear isometries).
\end{proof}

\begin{proposition}
  \label{p:5}
  Suppose that \cref{a:1} is in force.
  Let $(X_n)_{n\in\NN}$ be a sequence in $\FH$ that converges to some
  $X\in\FH$ and, for every $n\in\NN$, let $\ad_n\in\AD_{X_n}$.
  Suppose that there exists $\ad\in\AD$ such that
  $\Lambda_{\ad_n}\to\Lambda_{\ad}$.
  Then $\ad\in\AD_X$.
\end{proposition}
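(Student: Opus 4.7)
The plan is to pass to the limit in the defining identity $X_n=\Lambda_{\ad_n}\gamma(X_n)$ coming from $\ad_n\in\AD_{X_n}$ (cf.\ \cref{e:ax}), and then use uniqueness of the limit of $(X_n)_{n\in\NN}$ in $\FH$ to conclude that $X=\Lambda_{\ad}\gamma(X)$.

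First, I would use \cref{p:8}\,\cref{p:8i} to note that $\gamma$ is nonexpansive, hence continuous, so $\gamma(X_n)\to\gamma(X)$ in $\XX$. Second, I would estimate via the triangle inequality
\begin{equation}
  \norm!{\Lambda_{\ad_n}\gamma(X_n)-\Lambda_{\ad}\gamma(X)}
  \leq\norm!{\Lambda_{\ad_n}\brk!{\gamma(X_n)-\gamma(X)}}
  +\norm!{(\Lambda_{\ad_n}-\Lambda_{\ad})\gamma(X)}.
\end{equation}
The first term on the right-hand side equals $\norm{\gamma(X_n)-\gamma(X)}_{\XX}$ because $\Lambda_{\ad_n}$ is a linear isometry (by \cref{d:1}), and thus tends to $0$ by the first observation. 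The second term tends to $0$ by the assumed convergence $\Lambda_{\ad_n}\to\Lambda_{\ad}$ in $\mathscr{L}(\XX,\FH)$, which in our finite-dimensional setting coincides with pointwise convergence.

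Combining these gives $\Lambda_{\ad_n}\gamma(X_n)\to\Lambda_{\ad}\gamma(X)$. Since $\Lambda_{\ad_n}\gamma(X_n)=X_n\to X$, uniqueness of limits in $\FH$ yields $X=\Lambda_{\ad}\gamma(X)$, that is, $\ad\in\AD_X$.

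There is no serious obstacle here; the argument is a routine limit-passage. The only point to watch is splitting the difference $\Lambda_{\ad_n}\gamma(X_n)-\Lambda_{\ad}\gamma(X)$ so that the isometry property of $\Lambda_{\ad_n}$ controls the varying-input term while the pointwise convergence of $\Lambda_{\ad_n}$ controls the varying-operator term applied to the fixed vector $\gamma(X)$.
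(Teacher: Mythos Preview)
Your proof is correct and follows exactly the same approach as the paper: use nonexpansiveness of $\gamma$ to get $\gamma(X_n)\to\gamma(X)$, then pass to the limit in $X_n=\Lambda_{\ad_n}\gamma(X_n)$ to obtain $X=\Lambda_{\ad}\gamma(X)$. Your triangle-inequality estimate is merely a spelled-out justification of the limit $\lim\Lambda_{\ad_n}\gamma(X_n)=\Lambda_{\ad}\gamma(X)$ that the paper asserts in one line.
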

\begin{proof}
  Since $\gamma$ is nonexpansive (\cref{p:8}\,\cref{p:8i}), we get
  $\gamma(X_n)\to\gamma(X)$. Thus
  $X
  =\lim X_n
  =\lim\Lambda_{\ad_n}\gamma(X_n)
  =\Lambda_{\ad}\gamma(X)$,
  which confirms that $\ad\in\AD_X$.
\end{proof}

Finally, we point out an important consequence of \cref{a:1} that will
be employed repeatedly in the subsequent proofs.

\begin{proposition}
  \label{p:54}
  Suppose that \cref{a:1} is in force.
  Then the set $\set{\Lambda_{\ad}}_{\ad\in\AD}$ is compact
  in $\mathscr{L}(\XX,\FH)$.
\end{proposition}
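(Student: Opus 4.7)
The plan is to observe that $\mathscr{L}(\XX,\FH)$ is finite-dimensional (since both $\XX$ and $\FH$ are), so by the Heine--Borel theorem it suffices to verify that the set $\set{\Lambda_{\ad}}_{\ad\in\AD}$ is both closed and bounded.

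Closedness is immediate from \cref{a:1}, which is precisely the additional assumption added in this paper for exactly this kind of limiting argument. For boundedness, I would exploit the fact that each $\Lambda_{\ad}$ is by definition (see \cref{d:1}) a linear isometry from $\XX$ to $\FH$. Consequently, for every $\ad\in\AD$ and every $x\in\XX$ with $\norm{x}\le 1$, one has $\norm{\Lambda_{\ad}x}_{\FH}=\norm{x}_{\XX}\le 1$, so that $\norm{\Lambda_{\ad}}_{\mathscr{L}(\XX,\FH)}\le 1$. Thus the whole family lies in the closed unit ball of $\mathscr{L}(\XX,\FH)$, giving the required boundedness.

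There is no real obstacle here; the proof is a one-liner once the two ingredients (closedness from \cref{a:1}, boundedness from the isometry property built into \cref{d:1}) are assembled, together with finite-dimensionality of $\mathscr{L}(\XX,\FH)$.
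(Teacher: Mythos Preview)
Your proposal is correct and matches the paper's own proof essentially verbatim: the paper likewise invokes the closedness of $\set{\Lambda_{\ad}}_{\ad\in\AD}$ from \cref{a:1} together with the fact that each $\Lambda_{\ad}$ is an isometry (so $\norm{\Lambda_{\ad}}_{\mathscr{L}(\XX,\FH)}=1$), and concludes compactness via finite-dimensionality.
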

\begin{proof}
  This follows from the closedness of
  $\set{\Lambda_{\ad}}_{\ad\in\AD}$
  and the fact that
  $(\forall\ad\in\AD)$ $\norm{\Lambda_{\ad}}_{\mathscr{L}(\XX,\FH)}=1$.
\end{proof}

\section{Variational geometry of spectral sets}
\label{sec:3}

Consider \cref{a:1}.
A main result in this paper is \cref{t:1},
which establishes the following relationship between the
generalized subdifferentials of a spectral function and those of the
associated invariant function:
\begin{equation}
  \label{e:y21z}
  (\forall X\in\FH)\quad
  \partial_{\#}(\varphi\circ\gamma)(X)
  =\menge{\Lambda_{\ad}y}{y\in\partial_{\#}\varphi\brk!{\gamma(X)}\,\,
    \text{and}\,\,\ad\in\AD_X}.
\end{equation}
Here, $\varphi\colon\XX\to\RXX$ is an $\SD$-invariant function, and
$\partial_{\#}$ stands for either the Fr\'echet or the limiting
subdifferential operator.
Let us trace the path leading to this result.
A common strategy in variational analysis for establishing a result for
functions is to first treat the special case of sets, and then
apply it to epigraphs to obtain the general case.
Following this path, we establish in this section the identity
\begin{equation}
  \label{e:8kr0}
  (\forall X\in\FH)\quad
  N_{\#}\brk!{X;\gamma^{-1}(D)}
  =\menge{\Lambda_{\ad}y}{y\in N_{\FS}\brk!{\gamma(X);D}
    \,\,\text{and}\,\,\ad\in\AD_X},
\end{equation}
where $\emp\neq D\subset\XX$ is $\SD$-invariant,
and $N_{\#}$ signifies either the Fr\'echet or the limiting normal cone
operator.
In \cref{sec:41}, we will apply this result
to epigraphs -- within the context of the
``product'' spectral decomposition system from \cref{ex:7} --
to obtain the desired expressions \cref{e:y21z}.

We proceed by recalling relevant notions from variational analysis
following the standard references \cite{Mord06,Rock09},
to which we refer for background and further details.
Let $\HH$ be a Euclidean space and let $D$ be a nonempty subset of
$\HH$. The Fr\'echet normal cone operator of $D$ is
\begin{equation}
  N_{\FS}\colon\HH\to 2^{\HH}\colon
  x\mapsto
  \begin{cases}
    \displaystyle
    \Menge4{y\in\HH}{\limsup_{\substack{z\to x\\
          z\in D\smallsetminus\set{x}}}
      \frac{\scal{z-x}{y}}{\norm{z-x}}\leq 0}
      &\text{if}\,\,x\in D,\\
    \emp&\text{otherwise}.
  \end{cases}
\end{equation}
Evidently,
\begin{equation}
  \label{e:56f8}
  d_D=d_{\overline{D}}
  \quad\text{and}\quad
  (\forall x\in D)\,\,
  N_{\FS}(x;D)=N_{\FS}(x;\overline{D}).
\end{equation}
The limiting normal cone to $D$ at a point $x\in D$,
in symbols $N_{\LS}(x;D)$,
is the set of all $y\in\HH$ for which there exists a sequence
$(x_n,y_n)_{n\in\NN}$ in $\gra N_{\FS}$ such that
$x_n\to x$ and $y_n\to y$;
here, $\gra M=\menge{(x,y)\in\HH\times\HH}{y\in Mx}$ denotes the graph
of a set-valued operator $M\colon\HH\to 2^{\HH}$.
In addition, we set
$(\forall x\in\HH\smallsetminus D)$
$N_{\LS}(x;D)=\emp$.

Next, we collect two preliminary results that will be frequently
employed.

\begin{lemma}
  \label{l:1}
  Let $\HH$ be a Euclidean space,
  let $D$ be a nonempty subset of $\HH$,
  and let $x\in D$. Then the following hold:
  \begin{enumerate}
    \item\label{l:1i}
      Let $y\in N_{\FS}(x;D)$. Then there exist sequences
      $(x_n)_{n\in\NN}$ and $(y_n)_{n\in\NN}$ in $\HH$ such that
      \begin{equation}
        x_n\to x,
        \quad y_n\to y,
        \quad\text{and}\quad
        (\forall n\in\NN)\,\,
        y_n\in\cone\brk!{x_n-\Proj_{\overline{D}}x_n},
      \end{equation}
      where $(\forall C\in 2^{\HH})$
      $\cone C=\bigcup_{\alpha\in\RPP}\alpha C$.
    \item\label{l:1ii}
      Let $y\in\HH$. Then
      \begin{equation}
        y\in N_{\FS}(x;D)
        \quad\Leftrightarrow\quad
        \lim_{\alpha\downarrow 0}
        \frac{d_D(x+\alpha y)}{\alpha}=\norm{y}.
      \end{equation}
  \end{enumerate}
\end{lemma}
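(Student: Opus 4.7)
The plan is to prove (ii) first, since it is the cleaner analytic statement, and then deduce (i) from it via a projection construction. Throughout, \cref{e:56f8} lets me replace $D$ by $\overline{D}$ in both the distance function and the Fréchet normal cone at $x$, and the case $y=0$ is trivial, so I assume $y\neq 0$.

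For the forward direction of (ii), the relation $x\in D$ gives $d_D(x+\alpha y)\leq\alpha\norm{y}$ for every $\alpha>0$, so only the matching $\liminf$ bound requires proof. Given $\varepsilon>0$, the definition of Fréchet normality furnishes $\delta>0$ such that $\scal{z-x}{y}\leq\varepsilon\norm{z-x}$ whenever $z\in D$ and $\norm{z-x}\leq\delta$. For small $\alpha>0$, select $z_\alpha\in\Proj_{\overline{D}}(x+\alpha y)$; then $\norm{z_\alpha-x}\leq 2\alpha\norm{y}\leq\delta$, so the Fréchet bound applies, and
\[
  d_D(x+\alpha y)^2
  =\norm{\alpha y-(z_\alpha-x)}^2
  =\alpha^2\norm{y}^2-2\alpha\scal{y}{z_\alpha-x}+\norm{z_\alpha-x}^2
  \geq\alpha^2\norm{y}^2-4\varepsilon\alpha^2\norm{y}.
\]
Taking $\liminf_{\alpha\downarrow 0}$ and then $\varepsilon\downarrow 0$ yields the claim. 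For the converse, if $y\notin N_\FS(x;D)$, there exist $\varepsilon>0$ and $(z_n)_{n\in\NN}$ in $D\setminus\set{x}$ with $z_n\to x$ and $\scal{u_n}{y}>\varepsilon$, where $r_n=\norm{z_n-x}$ and $u_n=(z_n-x)/r_n$. Setting $\alpha_n:=r_n/\scal{u_n}{y}\to 0^+$ and using $z_n$ as a competitor in the infimum defining $d_D$,
\[
  d_D(x+\alpha_n y)^2\leq\norm{\alpha_n y-r_n u_n}^2=\alpha_n^2\brk!{\norm{y}^2-\scal{u_n}{y}^2}\leq\alpha_n^2\brk!{\norm{y}^2-\varepsilon^2},
\]
which contradicts $d_D(x+\alpha_n y)/\alpha_n\to\norm{y}$.

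For (i), let $\alpha_n\downarrow 0$, set $x_n:=x+\alpha_n y$, select $p_n\in\Proj_{\overline{D}}(x_n)$, and define $y_n:=(x_n-p_n)/\alpha_n$. Then $x_n\to x$ and $y_n\in\cone(x_n-\Proj_{\overline{D}}(x_n))$ by construction, while $\norm{y_n}=d_D(x_n)/\alpha_n\to\norm{y}$ by (ii). Since $y_n=y-(p_n-x)/\alpha_n$, proving $y_n\to y$ reduces to $(p_n-x)/\alpha_n\to 0$, and \emph{this is the main obstacle}: mere norm convergence from (ii) is not enough to pin down the direction of $y_n$. The sequence $((p_n-x)/\alpha_n)_{n\in\NN}$ is bounded by $2\norm{y}$, so passing to a subsequence I may assume $(p_n-x)/\alpha_n\to w\in\HH$. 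If $w\neq 0$, then eventually $p_n\neq x$ and $(p_n-x)/\norm{p_n-x}\to w/\norm{w}$; since $p_n\in\overline{D}$, $p_n\to x$, and $y\in N_\FS(x;D)=N_\FS(x;\overline{D})$, the Fréchet-normal inequality along $(p_n)$ forces $\scal{w}{y}\leq 0$. Passing to the limit in $\norm{y_n}^2=\norm{y}^2-2\scal{y}{(p_n-x)/\alpha_n}+\norm{(p_n-x)/\alpha_n}^2$ and invoking $\norm{y_n}^2\to\norm{y}^2$ yields $\norm{w}^2=2\scal{y}{w}\leq 0$, whence $w=0$, a contradiction. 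The delicate interplay is precisely between the magnitude information supplied by (ii) and the directional information encoded in the Fréchet inequality, which together pin down both the size and the direction of the approximating sequence.
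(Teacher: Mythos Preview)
Your proof of \cref{l:1ii} is correct and follows the same line as the paper's: project $x+\alpha y$ onto $\overline{D}$, expand the squared distance, and use the Fr\'echet inequality for the forward direction; for the converse, pick a sequence witnessing failure of Fr\'echet normality and test it against $d_D$. The only cosmetic differences are your choice of scaling $\alpha_n=r_n/\scal{u_n}{y}$ (the paper uses $\alpha_n=\norm{w_n-x}/\kappa$) and the slightly different lower bound you extract, but the mechanism is identical.

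For \cref{l:1i} the paper simply invokes \cref{e:56f8} together with \cite[Theorem~1.6]{Mord06}, i.e., the standard representation of normal vectors as limits of proximal normals. Your argument is genuinely different and self-contained: you manufacture the approximating sequence explicitly by setting $x_n=x+\alpha_n y$, $p_n\in\Proj_{\overline{D}}x_n$, $y_n=(x_n-p_n)/\alpha_n$, and then use \cref{l:1ii} to control $\norm{y_n}$ while the Fr\'echet inequality pins down the direction (the subsequence argument forcing $w=0$ is clean and correct). This buys you a proof that does not rely on an external reference and makes the dependence on \cref{l:1ii} transparent; the paper's route is shorter but opaque. Either approach is fine for the purposes of the paper, since \cref{l:1i} is only used once, in the proof of \cref{p:2}\,\cref{p:2i}.
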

\begin{proof}
  \cref{l:1i}:
  Combine \cref{e:56f8} and \cite[Theorem~1.6]{Mord06}.

  \cref{l:1ii}:
  We adapt and simplify the proof of \cite[Lemma~2.1\,(i)]{Drus18} for
  this special case.
  Since $x\in D$, we infer from \cref{e:56f8} that it suffices to
  establish the equivalence
  \begin{equation}
    y\in N_{\FS}(x;\overline{D})
    \quad\Leftrightarrow\quad
    \lim_{\alpha\downarrow 0}
    \frac{d_{\overline{D}}(x+\alpha y)}{\alpha}=\norm{y}.
  \end{equation}
  For every $\alpha\in\RPP$, because
  $\overline{D}$ is closed and $\HH$ is finite-dimensional,
  there exists $z_{\alpha}\in\overline{D}$ such that
  $\norm{x+\alpha y-z_{\alpha}}=d_{\overline{D}}(x+\alpha y)$.
  In turn, since $x\in D$, we have
  \begin{equation}
    \brk!{\forall\alpha\in\RPP}\quad
    \norm{x+\alpha y-z_{\alpha}}
    =d_{\overline{D}}(x+\alpha y)
    =\inf_{z\in\overline{D}}\norm{x+\alpha y-z}
    \leq\alpha\norm{y}
  \end{equation}
  Hence
  \begin{equation}
    \label{e:ma8x}
    \limsup_{\alpha\downarrow 0}
    \frac{d_{\overline{D}}(x+\alpha y)}{\alpha}\leq\norm{y}
  \end{equation}
  and
  \begin{equation}
    \label{e:na9y}
    \lim_{\alpha\downarrow 0}z_{\alpha}=x.
  \end{equation}
  Now suppose that $y\in N_{\FS}(x;\overline{D})$
  and let $\varepsilon\in\zeroun$.
  Then there exists $\delta\in\zeroun$ such that
  \begin{equation}
    \brk!{\forall z\in\overline{D}\cap B(x;\delta)}\quad
    \scal{z-x}{y}\leq\varepsilon\norm{z-x}.
  \end{equation}
  We deduce from \cref{e:na9y} that there exists
  $\beta\in\zeroun$ such that
  $(\forall\alpha\in\intv[o]{0}{\beta})$
  $z_{\alpha}\in\overline{D}\cap B(x;\delta)$.
  Therefore
  \begin{align}
    (\forall\alpha\in\intv[o]{0}{\beta})\quad
    d_{\overline{D}}^2(x+\alpha y)
    &=\norm{x-z_{\alpha}}^2-2\alpha\scal{z_{\alpha}-x}{y}
    +\alpha^2\norm{y}^2
    \nonumber\\
    &\geq\norm{x-z_{\alpha}}^2-2\alpha\varepsilon\norm{x-z_{\alpha}}
    +\alpha^2\norm{y}^2
    \nonumber\\
    &=\brk!{\norm{x-z_{\alpha}}-\alpha\varepsilon}^2
    +\alpha^2(\norm{y}^2-\varepsilon^2).
    \nonumber\\
    &\geq\alpha^2(\norm{y}^2-\varepsilon^2).
  \end{align}
  Thus
  \begin{equation}
    \liminf_{\alpha\downarrow 0}
    \frac{d_{\overline{D}}^2(x+\alpha y)}{\alpha^2}
    \geq\norm{y}^2-\varepsilon^2.
  \end{equation}
  Since $\varepsilon\in\zeroun$ was arbitrarily chosen,
  it follows that
  \begin{equation}
    \liminf_{\alpha\downarrow 0}
    \frac{d_{\overline{D}}^2(x+\alpha y)}{\alpha^2}
    \geq\norm{y}^2.
  \end{equation}
  Combining with \cref{e:ma8x}, we obtain
  $\lim_{\alpha\downarrow 0}d_{\overline{D}}(x+\alpha
  y)/\alpha=\norm{y}$.
  Conversely, suppose that
  $\lim_{\alpha\downarrow 0}d_{\overline{D}}(x+\alpha
  y)/\alpha=\norm{y}$ but that
  $y\notin N_{\FS}(x;\overline{D})$.
  Then, by definition of Fr\'echet normal cones,
  there exists $\kappa\in\zeroun$
  and a sequence $(w_n)_{n\in\NN}$ in
  $\overline{D}\smallsetminus\set{x}$ such that $w_n\to x$ and that
  $(\forall n\in\NN)$
  $\scal{w_n-x}{y}\geq\kappa\norm{w_n-x}$.
  Set
  $(\forall n\in\NN)$
  $\alpha_n=\norm{w_n-x}/\kappa$.
  Then $\alpha_n\to 0$ and, since
  $\set{w_n}_{n\in\NN}\subset\overline{D}$, we derive that
  \begin{align}
    (\forall n\in\NN)\quad
    \frac{d_{\overline{D}}^2(x+\alpha_n y)}{\alpha_n^2}
    &\leq\frac{1}{\alpha_n^2}\brk!{
      \norm{x-w_n}^2-2\alpha_n\scal{w_n-x}{y}+\alpha_n^2\norm{y}^2}
    \nonumber\\
    &\leq\kappa^2-\frac{2\kappa\norm{w_n-x}}{\alpha_n}
    +\norm{y}^2
    \nonumber\\
    &={-}\kappa^2+\norm{y}^2.
  \end{align}
  Letting $n\to\pinf$ yields a contradiction.
\end{proof}

The inclusion $\supset$ in \cref{e:8kr0} follows at once from the
following result.

\begin{proposition}
  \label{p:6}
  Suppose that \cref{a:1} is in force.
  Let $D$ be a nonempty $\SD$-invariant subset of $\XX$,
  let $x$ and $y$ be in $\XX$, and let $\ad\in\AD$.
  Then $y\in N_{\FS}(x;D)$ if and only if
  $\Lambda_{\ad}y\in N_{\FS}(\Lambda_{\ad}x;\gamma^{-1}(D))$.
\end{proposition}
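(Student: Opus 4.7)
The plan is to reduce the equivalence to the distance-function characterization of Fréchet normals supplied by \cref{l:1}\,\cref{l:1ii} and then unwind everything using three structural identities: the fact that $\Lambda_{\ad}$ is a linear isometry, the relation $d_{\gamma^{-1}(D)}=d_D\circ\gamma$ from \cref{p:1}\,\cref{p:1iii}, and the invariance $d_D\circ\gamma\circ\Lambda_{\ad}=d_D$ from \cref{p:1}\,\cref{p:1ii}.

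First I would settle the membership question. Using \cref{d:1b}, $\gamma(\Lambda_{\ad}x)=\tau(x)\in\SD\cdot x$, and since $D$ is $\SD$-invariant this gives the equivalence $x\in D \Leftrightarrow \tau(x)\in D \Leftrightarrow \Lambda_{\ad}x\in\gamma^{-1}(D)$. In the case $x\notin D$, both $N_{\FS}(x;D)$ and $N_{\FS}(\Lambda_{\ad}x;\gamma^{-1}(D))$ are empty by convention, so the claimed equivalence is vacuous. Thus I may assume $x\in D$, and hence $\Lambda_{\ad}x\in\gamma^{-1}(D)$.

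Now the main computation. By \cref{l:1}\,\cref{l:1ii} applied on $\FH$ to the set $\gamma^{-1}(D)$,
\begin{equation}
  \Lambda_{\ad}y\in N_{\FS}\brk!{\Lambda_{\ad}x;\gamma^{-1}(D)}
  \;\Leftrightarrow\;
  \lim_{\alpha\downarrow 0}
  \frac{d_{\gamma^{-1}(D)}(\Lambda_{\ad}x+\alpha\Lambda_{\ad}y)}{\alpha}
  =\norm{\Lambda_{\ad}y}_{\FH}.
\end{equation}
Since $\Lambda_{\ad}$ is linear, $\Lambda_{\ad}x+\alpha\Lambda_{\ad}y=\Lambda_{\ad}(x+\alpha y)$. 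Chaining \cref{p:1}\,\cref{p:1iii} with \cref{p:1}\,\cref{p:1ii} then gives
\begin{equation}
  d_{\gamma^{-1}(D)}\brk!{\Lambda_{\ad}(x+\alpha y)}
  =(d_D\circ\gamma\circ\Lambda_{\ad})(x+\alpha y)
  =d_D(x+\alpha y).
\end{equation}
Moreover, since $\Lambda_{\ad}$ is an isometry, $\norm{\Lambda_{\ad}y}_{\FH}=\norm{y}_{\XX}$. Substituting, the previous characterization becomes
\begin{equation}
  \lim_{\alpha\downarrow 0}\frac{d_D(x+\alpha y)}{\alpha}=\norm{y}_{\XX},
\end{equation}
which by \cref{l:1}\,\cref{l:1ii} applied on $\XX$ to the set $D$ is precisely the condition $y\in N_{\FS}(x;D)$.

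There is no substantial obstacle: the proof is a bookkeeping argument that only works because all three ingredients (linearity and isometric property of $\Lambda_{\ad}$, the distance identity through $\gamma$, and the $\SD$-invariance of $d_D$) are already available from \cref{sec:2}. The only mildly delicate point is remembering to handle the case $x\notin D$ separately, which is immediate from the $\SD$-invariance of $D$ together with property~\cref{d:1b}.
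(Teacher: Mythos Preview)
Your proof is correct and follows essentially the same route as the paper: both arguments reduce to \cref{l:1}\,\cref{l:1ii}, then use \cref{p:1}\,\cref{p:1iii} and \cref{p:1}\,\cref{p:1ii} together with the isometry property of $\Lambda_{\ad}$ to identify the two distance quotients. The only cosmetic difference is that the paper obtains the membership equivalence $x\in D\Leftrightarrow\Lambda_{\ad}x\in\gamma^{-1}(D)$ by citing \cite[Corollary~4.3]{PartI}, whereas you derive it directly from property~\cref{d:1b} and the $\SD$-invariance of $D$.
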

\begin{proof}
  Since \cite[Corollary~4.3]{PartI} asserts that
  $D=\Lambda_{\ad}^{-1}(\gamma^{-1}(D))$, we deduce that
  $x\in D$ $\Leftrightarrow$
  $\Lambda_{\ad}x\in\gamma^{-1}(D)$.
  On the other hand,
  using items \cref{p:1iii} and \cref{p:1ii} in \cref{p:1},
  we obtain
  \begin{equation}
    (\forall\alpha\in\RP)\quad
    d_{\gamma^{-1}(D)}(\Lambda_{\ad}x+\alpha\Lambda_{\ad}y)
    =(d_D\circ\gamma\circ\Lambda_{\ad})(x+\alpha y)
    =d_D(x+\alpha y).
  \end{equation}
  Altogether, since $\Lambda_{\ad}$ is a linear isometry,
  we derive from \cref{l:1}\,\cref{l:1ii} that
  \begin{align}
    y\in N_{\FS}(x;D)
    &\Leftrightarrow
    x\in D
    \,\,\text{and}\,\,
    \lim_{\alpha\downarrow 0}\frac{d_D(x+\alpha y)}{\alpha}
    =\norm{y}
    \nonumber\\
    &\Leftrightarrow
    \Lambda_{\ad}x\in\gamma^{-1}(D)
    \,\,\text{and}\,\,
    \lim_{\alpha\downarrow 0}\frac{
      d_{\gamma^{-1}(D)}(\Lambda_{\ad}x+\alpha\Lambda_{\ad}y)}{\alpha}
    =\norm{\Lambda_{\ad}y}
    \nonumber\\
    &\Leftrightarrow
    \Lambda_{\ad}y\in N_{\FS}\brk!{\Lambda_{\ad}x;\gamma^{-1}(D)},
  \end{align}
  as claimed.
\end{proof}

We now establish a key identity that underpins the proof of the inclusion
$\subset$ in \cref{e:8kr0}. Beyond this context, it may also be of
independent interest, and we therefore state it as a standalone result.

\begin{proposition}
  \label{p:9}
  Suppose that \cref{a:1} is in force.
  Let $D$ be a nonempty $\SD$-invariant subset of $\XX$
  and let $X\in\FH$. Then
  \begin{equation}
    \cone\brk!{X-\Proj_{\gamma^{-1}(D)}X}
    =\menge{\Lambda_{\ad}y}{y\in\cone\brk!{\gamma(X)-\Proj_D\gamma(X)}
      \,\,\text{and}\,\,\ad\in\AD_X}.
  \end{equation}
\end{proposition}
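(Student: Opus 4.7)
The plan is to reduce the identity to the explicit description of $\Proj_{\gamma^{-1}(D)}X$ from \cref{p:1}, together with the linearity of each $\Lambda_{\ad}$ and the simultaneous decomposition property $X = \Lambda_{\ad}\gamma(X)$ available when $\ad \in \AD_X$. The crux is the observation that for $Z \in \Proj_{\gamma^{-1}(D)}X$, part~\cref{p:1iv} of \cref{p:1} yields an index $\ad \in \AD_X$ such that also $Z = \Lambda_{\ad}\gamma(Z)$; hence the difference $X - Z$ factors as $\Lambda_{\ad}(\gamma(X) - \gamma(Z))$ with $\gamma(Z) \in \Proj_D\gamma(X)$. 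This ``common spectral basis'' is the only nontrivial input, and both inclusions then follow by a direct rewriting.

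For the inclusion $\supset$, take $w = \Lambda_{\ad}y$ with $\ad \in \AD_X$ and $y \in \cone(\gamma(X) - \Proj_D\gamma(X))$, so that $y = \alpha(\gamma(X) - z)$ for some $\alpha > 0$ and $z \in \Proj_D\gamma(X)$. By \cref{p:1}\,\cref{p:1v}, $\Lambda_{\ad}z \in \Proj_{\gamma^{-1}(D)}X$, and since $X = \Lambda_{\ad}\gamma(X)$, the linearity of $\Lambda_{\ad}$ gives
\begin{equation}
  w = \Lambda_{\ad}y = \alpha\brk!{\Lambda_{\ad}\gamma(X) - \Lambda_{\ad}z} = \alpha(X - \Lambda_{\ad}z) \in \cone\brk!{X - \Proj_{\gamma^{-1}(D)}X}.
\end{equation}

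For the inclusion $\subset$, pick $w \in \cone(X - \Proj_{\gamma^{-1}(D)}X)$, so $w = \alpha(X - Z)$ for some $\alpha > 0$ and $Z \in \Proj_{\gamma^{-1}(D)}X$. Apply \cref{p:1}\,\cref{p:1iv} to produce $\ad \in \AD$ with $X = \Lambda_{\ad}\gamma(X)$, $Z = \Lambda_{\ad}\gamma(Z)$, and $\gamma(Z) \in \Proj_D\gamma(X)$; in particular $\ad \in \AD_X$. Set $y = \alpha(\gamma(X) - \gamma(Z))$, which lies in $\cone(\gamma(X) - \Proj_D\gamma(X))$. Then
\begin{equation}
  w = \alpha\brk!{\Lambda_{\ad}\gamma(X) - \Lambda_{\ad}\gamma(Z)} = \Lambda_{\ad}y,
\end{equation}
finishing the argument. (If $\Proj_D\gamma(X) = \emptyset$, both sides are empty by \cref{p:1}\,\cref{p:1v}, so the identity holds trivially.)

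There is no real obstacle here: the proof is essentially a one-line manipulation, and the only point requiring attention is that the same $\ad \in \AD_X$ must diagonalize $X$ and $Z$ \emph{simultaneously} — precisely what \cref{p:1}\,\cref{p:1iv} guarantees. The statement is singled out as its own proposition because this identity feeds directly into the ``$\subset$'' half of \cref{e:8kr0} via the characterization of Fr\'echet normals in \cref{l:1}\,\cref{l:1i}.
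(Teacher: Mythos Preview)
Your proof is correct and follows essentially the same route as the paper's: both inclusions are obtained from \cref{p:1}\,\cref{p:1iv}--\cref{p:1v} together with the linearity of $\Lambda_{\ad}$ and the identity $X=\Lambda_{\ad}\gamma(X)$ for $\ad\in\AD_X$. Your $\subset$ direction is in fact slightly more direct than the paper's, which names the projection as $X-Y$ and then passes through the adjoint $\Lambda_{\ad}^*$ to recover the preimage vector, whereas you read it off immediately from the simultaneous decomposition $Z=\Lambda_{\ad}\gamma(Z)$.
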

\begin{proof}
  Thanks to the linearity of the operators
  $(\Lambda_{\ad})_{\ad\in\AD}$, it suffices to show that
  \begin{equation}
    \label{e:g1e7}
    X-\Proj_{\gamma^{-1}(D)}X
    =\menge{\Lambda_{\ad}y}{y\in\gamma(X)-\Proj_D\gamma(X)
      \,\,\text{and}\,\,\ad\in\AD_X}.
  \end{equation}
  Let $Y\in\FH$ and suppose first that $Y\in X-\Proj_{\gamma^{-1}(D)}X$,
  that is, $X-Y\in\Proj_{\gamma^{-1}(D)}X$.
  \cref{p:1}\,\cref{p:1iv} asserts that
  \begin{equation}
    \label{e:kxnw}
    \gamma(X-Y)\in\Proj_D\gamma(X)
  \end{equation}
  and that there exists $\ad\in\AD$ for which
  \begin{equation}
    \label{e:psjm}
    X=\Lambda_{\ad}\gamma(X)
    \quad\text{and}\quad
    X-Y=\Lambda_{\ad}\gamma(X-Y).
  \end{equation}
  Since $\Lambda_{\ad}^*\circ\Lambda_{\ad}=\Id_{\XX}$, it follows that
  \begin{equation}
    \label{e:wkbm}
    \gamma(X-Y)
    =\Lambda_{\ad}^*(X-Y)
    =\Lambda_{\ad}^*X-\Lambda_{\ad}^*Y
    =\Lambda_{\ad}^*\brk!{\Lambda_{\ad}\gamma(X)}-\Lambda_{\ad}^*Y
    =\gamma(X)-\Lambda_{\ad}^*Y.
  \end{equation}
  In turn, invoking \cref{e:psjm} once more, we obtain
  $X-Y
  =\Lambda_{\ad}(\gamma(X)-\Lambda_{\ad}^*Y)
  =X-\Lambda_{\ad}(\Lambda_{\ad}^*Y)$, which leads to
  \begin{equation}
    Y=\Lambda_{\ad}\brk{\Lambda_{\ad}^*Y}.
  \end{equation}
  On the other hand, we deduce from \cref{e:kxnw,e:wkbm} that
  $\Lambda_{\ad}^*Y\in\gamma(X)-\Proj_D\gamma(X)$.
  We have thus established the inclusion $\subset$ in \cref{e:g1e7}.
  Conversely, suppose that $Y=\Lambda_{\ad}y$ for some
  $y\in\gamma(X)-\Proj_{D}\gamma(X)$ and some $\ad\in\AD_X$, and
  let $z\in\Proj_D\gamma(X)$ be such that $y=\gamma(X)-z$.
  \cref{p:1}\,\cref{p:1v} implies that
  $\Lambda_{\ad}z\in\Proj_{\gamma^{-1}(D)}X$ and, therefore,
  \begin{equation}
    Y
    =\Lambda_{\ad}\brk!{\gamma(X)-z}
    =X-\Lambda_{\ad}z
    \in X-\Proj_{\gamma^{-1}(D)}X,
  \end{equation}
  which completes the proof.
\end{proof}

We are now in a position to establish \cref{e:8kr0}.

\begin{proposition}
  \label{p:2}
  Suppose that \cref{a:1} is in force.
  Let $D$ be a nonempty $\SD$-invariant subset of $\XX$
  and let $X\in\FH$. Then the following hold:
  \begin{enumerate}
    \item\label{p:2i}
      $N_{\FS}\brk{X;\gamma^{-1}(D)}
      =\menge{\Lambda_{\ad}y}{y\in N_{\FS}\brk{\gamma(X);D}
        \,\,\text{and}\,\,\ad\in\AD_X}$.
    \item\label{p:2ii}
      $N_{\LS}\brk{X;\gamma^{-1}(D)}
      =\menge{\Lambda_{\ad}y}{y\in N_{\LS}\brk{\gamma(X);D}
        \,\,\text{and}\,\,\ad\in\AD_X}$.
  \end{enumerate}
\end{proposition}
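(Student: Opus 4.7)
The plan is to prove \cref{p:2i} first and then deduce \cref{p:2ii} from it by a compactness-and-limits argument. For \cref{p:2i}, the inclusion $\supset$ is immediate from \cref{p:6}: given $y\in N_{\FS}(\gamma(X);D)$ and $\ad\in\AD_X$, one has $\Lambda_{\ad}\gamma(X)=X$, so \cref{p:6} yields $\Lambda_{\ad}y\in N_{\FS}(X;\gamma^{-1}(D))$.

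For the harder inclusion $\subset$ of \cref{p:2i}, take $Y\in N_{\FS}(X;\gamma^{-1}(D))$ and apply \cref{l:1i} to produce sequences $(X_n)_{n\in\NN}$ and $(Y_n)_{n\in\NN}$ in $\FH$ with $X_n\to X$, $Y_n\to Y$, and $Y_n\in\cone(X_n-\Proj_{\overline{\gamma^{-1}(D)}}X_n)$. Since $\overline{\gamma^{-1}(D)}=\gamma^{-1}(\overline{D})$ by \cref{p:1i} and $\overline{D}$ remains $\SD$-invariant (because $\SD$ acts by homeomorphisms), \cref{p:9} applied to $\overline{D}$ at the point $X_n$ provides, for each $n$, a decomposition $Y_n=\Lambda_{\ad_n}y_n$ with $\ad_n\in\AD_{X_n}$ and $y_n\in\cone(\gamma(X_n)-\Proj_{\overline{D}}\gamma(X_n))$. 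Invoking the compactness of $\set{\Lambda_{\ad}}_{\ad\in\AD}$ from \cref{p:54}, pass to a subsequence along which $\Lambda_{\ad_n}\to\Lambda_{\ad}$ for some $\ad\in\AD$; by \cref{p:5}, $\ad\in\AD_X$. Since $\Lambda_{\ad_n}^*\Lambda_{\ad_n}=\Id_{\XX}$, one has $y_n=\Lambda_{\ad_n}^*Y_n\to\Lambda_{\ad}^*Y=:y$ and consequently $Y=\lim\Lambda_{\ad_n}y_n=\Lambda_{\ad}y$. Applying \cref{p:6} in the reverse direction to $\Lambda_{\ad}y=Y\in N_{\FS}(\Lambda_{\ad}\gamma(X);\gamma^{-1}(D))$ then yields $y\in N_{\FS}(\gamma(X);D)$, completing the proof of \cref{p:2i}.

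For \cref{p:2ii}, the inclusion $\supset$ follows from \cref{p:6} combined with the sequential definition of the limiting normal cone: if $y\in N_{\LS}(\gamma(X);D)$ is the limit of some $(x_n,y_n)\in\gra N_{\FS}(\Cdot;D)$ with $x_n\to\gamma(X)$, then for any $\ad\in\AD_X$, \cref{p:6} gives $\Lambda_{\ad}y_n\in N_{\FS}(\Lambda_{\ad}x_n;\gamma^{-1}(D))$ with $\Lambda_{\ad}x_n\to\Lambda_{\ad}\gamma(X)=X$ and $\Lambda_{\ad}y_n\to\Lambda_{\ad}y$, so $\Lambda_{\ad}y\in N_{\LS}(X;\gamma^{-1}(D))$. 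For the reverse inclusion, take $Y\in N_{\LS}(X;\gamma^{-1}(D))$, fix approximating sequences $X_n\to X$ and $Y_n\to Y$ with $Y_n\in N_{\FS}(X_n;\gamma^{-1}(D))$, and apply \cref{p:2i} to write $Y_n=\Lambda_{\ad_n}y_n$ with $\ad_n\in\AD_{X_n}$ and $y_n\in N_{\FS}(\gamma(X_n);D)$; the same compactness/\cref{p:5} argument (together with the nonexpansiveness of $\gamma$ from \cref{p:8i}, giving $\gamma(X_n)\to\gamma(X)$) extracts a subsequence with $\Lambda_{\ad_n}\to\Lambda_{\ad}$, $\ad\in\AD_X$, $y_n=\Lambda_{\ad_n}^*Y_n\to\Lambda_{\ad}^*Y=:y$, and thus $Y=\Lambda_{\ad}y$ with $y\in N_{\LS}(\gamma(X);D)$.

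The main obstacle is the $\subset$ direction of \cref{p:2i}, which encodes the nontrivial structural fact that every Fréchet normal $Y$ to $\gamma^{-1}(D)$ at $X$ must lie in the range of some $\Lambda_{\ad}$ with $\ad\in\AD_X$ (in the Hermitian setting, the classical statement that such $X$ and $Y$ can be simultaneously diagonalized). The plan sidesteps a direct construction by transferring a proximal-normal-style representation supplied by \cref{l:1i} through the pointwise set identity of \cref{p:9}, and then extracting a common $\ad\in\AD_X$ via the closedness/compactness of $\set{\Lambda_{\ad}}_{\ad\in\AD}$ built into \cref{a:1}; this is precisely where the new closedness hypothesis (beyond \cite{PartI}) plays its decisive role.
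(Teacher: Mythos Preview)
Your proof is correct and follows essentially the same route as the paper's: both directions of \cref{p:2i} are handled via \cref{p:6}, with the hard inclusion obtained by passing through \cref{l:1}\,\cref{l:1i}, \cref{p:1}\,\cref{p:1i}, \cref{p:9}, and the compactness from \cref{p:54} together with \cref{p:5}; \cref{p:2ii} is then deduced by the same compactness-and-limits scheme. The only cosmetic difference is that, once a subsequence with $\Lambda_{\ad_n}\to\Lambda_{\ad}$ has been extracted, you obtain convergence of $y_n$ directly from $y_n=\Lambda_{\ad_n}^*Y_n\to\Lambda_{\ad}^*Y$, whereas the paper first notes $\sup_n\norm{y_n}=\sup_n\norm{Y_n}<\pinf$ and extracts a further convergent subsequence of $(y_n)$; both lead to the same limit and the same application of \cref{p:6} at the end.
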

\begin{proof}
  We assume henceforth that
  \begin{equation}
    \label{e:wp32}
    X\in\gamma^{-1}(D)
  \end{equation}
  since otherwise all the cones are empty and the assertions are thus
  trivial.
  Additionally, note that $\overline{D}$ is also $\SD$-invariant.
  Now let $Y\in\FH.$

  \cref{p:2i}:
  The inclusion $\supset$ follows from \cref{p:6} applied to
  $x=\gamma(X)$. To establish the
  converse, suppose that $Y\in N_{\FS}(X;\gamma^{-1}(D))$.
  Then \cref{l:1}\,\cref{l:1i} states that there exist sequences
  $(X_n)_{n\in\NN}$ and $(Y_n)_{n\in\NN}$ in $\FH$ such that
  \begin{equation}
    \label{e:erw9}
    X_n\to X,
    \quad
    Y_n\to Y,
    \quad\text{and}\quad
    (\forall n\in\NN)\,\,
    Y_n\in\cone\brk!{X_n-\Proj_{\overline{\gamma^{-1}(D)}}X_n}.
  \end{equation}
  In turn, we learn from \cref{p:1}\,\cref{p:1i} and
  \cref{p:9} (applied to $\overline{D}$) that
  \begin{align}
    (\forall n\in\NN)\quad
    Y_n
    &\in\cone\brk!{X_n-\Proj_{\gamma^{-1}(\overline{D})}X_n}
    \nonumber\\
    &=\menge{\Lambda_{\ad}y}{y\in\cone\brk!{\gamma(X_n)
        -\Proj_{\overline{D}}\gamma(X_n)}\,\,\text{and}\,\,\ad\in\AD_{X_n}}.
  \end{align}
  Hence, we obtain sequences $(y_n)_{n\in\NN}$ in $\XX$
  and $(\ad_n)_{n\in\NN}$ in $\AD$ such that
  \begin{equation}
    \label{e:hmt5}
    (\forall n\in\NN)\quad
    \begin{cases}
      y_n\in\cone\brk!{\gamma(X_n)-\Proj_{\overline{D}}\gamma(X_n)}\\
      \ad_n\in\AD_{X_n}\\
      Y_n=\Lambda_{\ad_n}y_n.
    \end{cases}
  \end{equation}
  Since the operators $(\Lambda_{\ad})_{\ad\in\AD}$ are linear
  isometries, \cref{e:erw9} implies that
  $\sup_{n\in\NN}\norm{y_n}=\sup_{n\in\NN}\norm{Y_n}<\pinf$.
  Thus, we deduce from \cref{p:54} that there exist a strictly
  increasing sequence $(k_n)_{n\in\NN}$ in $\NN$, together with
  $y\in\XX$ and $\ad\in\AD$ such that
  \begin{equation}
    \label{e:5gtr}
      y_{k_n}\to y
      \quad\text{and}\quad
      \Lambda_{\ad_{k_n}}\to\Lambda_{\ad}.
  \end{equation}
  We derive from \cref{e:erw9,e:hmt5} that
  $Y
  =\lim Y_{k_n}
  =\lim\Lambda_{\ad_{k_n}}y_{k_n}
  =\Lambda_{\ad}y$.
  At the same time, invoking \cref{e:erw9,e:hmt5,e:5gtr},
  we deduce from \cref{p:5} that $\ad\in\AD_X$.
  Therefore
  \begin{equation}
    \Lambda_{\ad}y
    =Y\in
    N_{\FS}\brk!{X;\gamma^{-1}(D)}
    =N_{\FS}\brk!{\Lambda_{\ad}\gamma(X);\gamma^{-1}(D)}.
  \end{equation}
  Consequently, \cref{p:6} yields $y\in N_{\FS}(\gamma(X);D)$.

  \cref{p:2ii}:
  Assume that $Y\in N_{\LS}(X;\gamma^{-1}(D))$ and
  let $(X_n,Y_n)_{n\in\NN}$ be a sequence in
  $\gra N_{\FS}(\Cdot;\gamma^{-1}(D))$
  such that $X_n\to X$ and $Y_n\to Y$.
  It results from \cref{p:2i} that there exist sequences
  $(y_n)_{n\in\NN}$ in $\XX$ and $(\ad_n)_{n\in\NN}$ in $\AD$ such that
  \begin{equation}
    \label{e:mjl1}
    (\forall n\in\NN)\quad
    Y_n=\Lambda_{\ad_n}y_n,
    \quad
    y_n\in N_{\FS}\brk!{\gamma(X_n);D},
    \quad\text{and}\quad
    \ad_n\in\AD_{X_n}.
  \end{equation}
  Note that $\sup_{n\in\NN}\norm{y_n}=\sup_{n\in\NN}\norm{Y_n}<\pinf$.
  Combining with \cref{p:54},
  we obtain a strictly increasing sequence
  $(k_n)_{n\in\NN}$ in $\NN$, a point $y\in\XX$, and an element
  $\ad\in\AD$ such that
  \begin{equation}
    \label{e:mjl2}
    y_{k_n}\to y
    \quad\text{and}\quad
    \Lambda_{\ad_{k_n}}\to\Lambda_{\ad}.
  \end{equation}
  Moreover, the nonexpansiveness of $\gamma$ (\cref{p:8}\,\cref{p:8i})
  gives $\gamma(X_{k_n})\to\gamma(X)$.
  Therefore, by \cref{e:mjl1,e:mjl2}, $y\in N_{\LS}(\gamma(X);D)$.
  At the same time, we have
  $Y
  =\lim Y_{k_n}
  =\lim\Lambda_{\ad_{k_n}}y_{k_n}
  =\Lambda_{\ad}y$, while \cref{p:5} entails that $\ad\in\AD_X$.
  Finally, to establish the reverse inclusion,
  assume that $Y=\Lambda_{\bd}v$ for some
  $v\in N_{\LS}(\gamma(X);D)$ and some
  $\bd\in\AD_X$.
  Let $(z_n,v_n)_{n\in\NN}$ be a sequence in
  $\HH\times\HH$ such that
  $z_n\to\gamma(X)$,
  $v_n\to v$,
  and $(\forall n\in\NN)$
  $v_n\in N_{\FS}(z_n;D)$.
  \cref{p:6} entails that
  $(\forall n\in\NN)$
  $\Lambda_{\bd}v_n\in N_{\FS}(\Lambda_{\bd}z_n;\gamma^{-1}(D))$.
  Hence, since the continuity of $\Lambda_{\bd}$ gives
  $\Lambda_{\bd}z_n\to\Lambda_{\bd}\gamma(X)=X$
  and $\Lambda_{\bd}v_n\to\Lambda_{\bd}v=Y$,
  we conclude that $Y\in N_{\LS}(X;\gamma^{-1}(D))$.
\end{proof}

\section{Generalized subgradients of spectral functions}
\label{sec:4}

The goal of this section is to relate Fr\'echet, limiting, and Clarke
subdifferentials and Fr\'echet differentiability of an invariant
function to those of the induced spectral function.

\subsection{Fr\'echet and limiting subdifferentials of spectral
  functions}
\label{sec:41}

We start by recalling the notions of Fr\'echet and limiting
subdifferentials.
Let $\HH$ be a Euclidean space and let $f\colon\HH\to\RXX$.
The Fr\'echet subdifferential of $f$ is
\begin{equation}
  \partial_{\FS}f\colon\HH\to 2^{\HH}\colon
  x\mapsto
  \begin{cases}
    \displaystyle
    \Menge4{y\in\HH}{\liminf_{\substack{z\to x\\ z\neq x}}
      \frac{f(z)-f(x)-\scal{z-x}{y}}{\norm{z-x}}\geq 0}
        &\text{if}\,\,f(x)\in\RR,\\
    \emp&\text{otherwise}.
  \end{cases}
\end{equation}
The limiting subdifferential of $f$ at a point
$x\in\HH$ such that $f(x)\in\RR$, in symbols $\partial_{\LS}f(x)$,
is the set of all $y\in\HH$ for which there exists a sequence
$(x_n,y_n)_{n\in\NN}$ in $\gra\partial_{\FS}f$ such that
$x_n\to x$, $f(x_n)\to f(x)$, and $y_n\to y$.
Further, for every $x\in\HH$ such that $f(x)\in\set{\pm\infty}$,
we set $\partial_{\LS}f(x)=\emp$.
For every $\#\in\set{\FS,\LS}$ and every $x\in\HH$ such that
$f(x)\in\RR$, we have
\begin{equation}
  \label{e:Nepi}
  \partial_{\#}f(x)
  =\menge{y\in\HH}{(y,-1)\in N_{\#}\brk!{\brk{x,f(x)};\epi f}},
\end{equation}
where $\epi f=\menge{(y,\eta)\in\HH\times\RR}{f(y)\leq\eta}$
is the epigraph of $f$; see \cite[Theorem~8.9]{Rock09}.

Having assembled the necessary tools in \cref{sec:3}, we can now prove
\cref{e:y21z}. Our key observation is the identity
\cref{e:Nepi} and that, in the spectral decomposition system
$(\XX\oplus\RR,\SD,\boldsymbol{\gamma},
(\boldsymbol{\Lambda}_{\ad})_{\ad\in\AD})$ for $\FH\oplus\RR$
constructed in \cref{ex:7},
given an $\SD$-invariant function $\varphi\colon\XX\to\RXX$,
$\epi\varphi$ is an $\SD$-invariant subset of $\XX\oplus\RR$
and
\begin{align}
  \label{e:41wq}
  \boldsymbol{\gamma}^{-1}(\epi\varphi)
    &=\menge{(X,\xi)\in\FH\oplus\RR}{\boldsymbol{\gamma}(X,\xi)
      \in\epi\varphi}
    \nonumber\\
    &=\menge{(X,\xi)\in\FH\oplus\RR}{
      \brk!{\gamma(X),\xi}\in\epi\varphi}
    \nonumber\\
    &=\menge{(X,\xi)\in\FH\oplus\RR}{\varphi\brk!{\gamma(X)}\leq\xi}
    \nonumber\\
    &=\epi(\varphi\circ\gamma).
\end{align}
As a byproduct of our analysis, we also establish the implication
\begin{equation}
  \label{e:sqe2}
  (\forall x\in\XX)(\forall y\in\XX)(\forall\ad\in\AD)\quad
  y\in\partial_{\#}\varphi(x)
  \quad\Rightarrow\quad
  \Lambda_{\ad}y\in\partial_{\#}(\varphi\circ\gamma)(\Lambda_{\ad}x),
\end{equation}
which may be viewed as a strengthening of the inclusion
$\supset$ in \cref{e:y21z}. It reveals an invariance property of the
subdifferential operators under the ``action'' of the operators
$(\Lambda_{\ad})_{\ad\in\AD}$; see \cref{r:2} for a detailed discussion.
In particular, we will leverage this to establish in \cref{c:5} that the
Fr\'echet differentiability of an invariant function transfers to the
corresponding spectral function.

\begin{theorem}
  \label{t:1}
  Suppose that \cref{a:1} is in force.
  Let $\varphi\colon\XX\to\RXX$ be $\SD$-invariant and
  let $X\in\FH$.
  Then the following hold:
  \begin{enumerate}
    \item\label{t:1i}
      $\partial_{\FS}(\varphi\circ\gamma)(X)
      =\menge{\Lambda_{\ad}y}{y\in\partial_{\FS}\varphi(\gamma(X))\,\,
        \text{and}\,\,\ad\in\AD_X}$.
    \item\label{t:1ii}
      $\partial_{\LS}(\varphi\circ\gamma)(X)
      =\menge{\Lambda_{\ad}y}{y\in\partial_{\LS}\varphi(\gamma(X))\,\,
        \text{and}\,\,\ad\in\AD_X}$.
  \end{enumerate}
\end{theorem}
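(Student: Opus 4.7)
My plan is to deduce both identities from \cref{p:2} applied to the epigraph $\epi\varphi$ inside the product spectral decomposition system constructed in \cref{ex:7}. The launching pad is the characterization \cref{e:Nepi}, which expresses a subgradient as a normal vector with prescribed last coordinate; combined with \cref{e:41wq} -- namely $\boldsymbol{\gamma}^{-1}(\epi\varphi) = \epi(\varphi\circ\gamma)$ -- and the fact that $\boldsymbol{\Lambda}_{\ad}(y,\eta) = (\Lambda_{\ad}y,\eta)$ acts trivially on the scalar component, this reduces everything to the normal-cone formula \cref{p:2} for spectral sets.

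In detail, I would first dispose of the trivial case $\varphi(\gamma(X))\notin\RR$, in which both sides are empty. Setting $\xi = \varphi(\gamma(X))\in\RR$, the identity $\boldsymbol{\Lambda}_{\ad}\boldsymbol{\gamma}(X,\xi) = (\Lambda_{\ad}\gamma(X),\xi)$ shows that, in the product system, the set of admissible indices at $(X,\xi)$ coincides with $\AD_X$. Applying \cref{p:2} to the $\SD$-invariant set $\epi\varphi \subset \XX\oplus\RR$ at the point $(X,\xi)$ then gives, for $\#\in\set{\FS,\LS}$,
\begin{equation*}
  N_{\#}\bigl((X,\xi); \epi(\varphi\circ\gamma)\bigr)
  = \bigl\{(\Lambda_{\ad}y',\eta) \bigm| (y',\eta)\in N_{\#}\bigl((\gamma(X),\xi); \epi\varphi\bigr) \text{ and } \ad\in\AD_X\bigr\}.
\end{equation*}
Intersecting both sides with the affine slice $\set{(\Cdot,-1)}$ and invoking \cref{e:Nepi} twice -- once on $\varphi\circ\gamma$, once on $\varphi$ -- produces the two desired formulas simultaneously.

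The auxiliary implication \cref{e:sqe2} falls out of the same machinery: I would apply \cref{p:6} inside the product system to $\epi\varphi$ at the point $(x,\varphi(x))$, and use the $\SD$-invariance identity $\varphi(x) = \varphi(\tau(x)) = (\varphi\circ\gamma)(\Lambda_{\ad}x)$, which follows from \cref{d:1b} and \cref{p:85}, to identify the base point correctly. This yields the Fr\'echet version; the limiting version follows by a standard sequential argument, using the nonexpansiveness of $\gamma$ from \cref{p:8}\,\cref{p:8i} to propagate function-value convergence.

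The main obstacle will be bookkeeping rather than conceptual depth: one must consistently track which spectral decomposition system -- the original or the product one from \cref{ex:7} -- a given invocation of \cref{p:2}, \cref{p:6}, or \cref{e:Nepi} refers to, and verify that the last-coordinate constraint $\eta = -1$ is preserved under $\boldsymbol{\Lambda}_{\ad}$. A minor secondary point concerns the extra condition $f(x_n)\to f(x)$ in the definition of $\partial_{\LS}$; this is automatically supplied by any realizing sequence for the limiting normal cone to $\epi(\varphi\circ\gamma)$, so no separate argument is needed.
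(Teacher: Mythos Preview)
Your proposal is correct and matches the paper's own proof essentially line for line: both pass to the product system of \cref{ex:7}, invoke \cref{e:41wq} to identify $\boldsymbol{\gamma}^{-1}(\epi\varphi)$ with $\epi(\varphi\circ\gamma)$, apply \cref{p:2} (parts \cref{p:2i} and \cref{p:2ii} respectively) at $(X,\xi)$, and then read off the subdifferential via \cref{e:Nepi} using that $\boldsymbol{\Lambda}_{\ad}$ fixes the scalar coordinate. Your side remark on \cref{e:sqe2} via \cref{p:6} in the product system is likewise exactly how the paper handles it in \cref{p:45}.
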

\begin{proof}
  We work with the spectral decomposition system
  $(\XX\oplus\RR,\SD,\boldsymbol{\gamma},
  (\boldsymbol{\Lambda}_{\ad})_{\ad\in\AD})$ of \cref{ex:7}.
  It will be convenient to set
  \begin{equation}
    x=\gamma(X)\quad\text{and}\quad\xi=\varphi\brk!{\gamma(X)}.
  \end{equation}
  Note that $\boldsymbol{\gamma}(X,\xi)=(x,\xi)$.
  Moreover, since the assertions trivially hold when
  $\xi\in\set{\pm\infty}$,
  we assume henceforth that $\xi\in\RR$.

  \cref{t:1i}:
  We derive from \cref{e:Nepi}, \cref{e:41wq},
  and \cref{p:2}\,\cref{p:2i}
  (applied to the system
  $(\XX\oplus\RR,\SD,\boldsymbol{\gamma},
  (\boldsymbol{\Lambda}_{\ad})_{\ad\in\AD})$
  and the $\SD$-invariant set $\epi\varphi$)
  that
  \begin{align}
    &(\forall Y\in\FH)\quad
    Y\in\partial_{\FS}(\varphi\circ\gamma)(X)
    \nonumber\\
    &\hskip 21mm
    \Leftrightarrow
    (Y,-1)\in N_{\FS}\brk!{\brk{X,\xi};\epi(\varphi\circ\gamma)}
    =N_{\FS}\brk!{\brk{X,\xi};
      \boldsymbol{\gamma}^{-1}\brk{\epi\varphi}}
    \nonumber\\
    &\hskip 21mm
    \Leftrightarrow
    \brk!{\exi(y,\eta)\in\XX\oplus\RR}(\exi\ad\in\AD)\,\,
    \begin{cases}
      (y,\eta)\in N_{\FS}\brk!{
        \boldsymbol{\gamma}\brk{X,\xi};\epi\varphi}\\
      \brk{X,\xi}
      =\boldsymbol{\Lambda}_{\ad}\boldsymbol{\gamma}\brk{X,\xi}
      =\brk{\Lambda_{\ad}x,\xi}\\
      (Y,-1)
      =\boldsymbol{\Lambda}_{\ad}(y,\eta)
      =\brk{\Lambda_{\ad}y,\eta}
    \end{cases}
    \nonumber\\
    &\hskip 21mm
    \Leftrightarrow
    \brk{\exi y\in\XX}(\exi\ad\in\AD)\,\,
    \begin{cases}
      (y,-1)\in N_{\FS}\brk!{\brk{x,\xi};\epi\varphi}\\
      X=\Lambda_{\ad}x\,\,\text{and}\,\,
      Y=\Lambda_{\ad}y
    \end{cases}
    \nonumber\\
    &\hskip 21mm
    \Leftrightarrow
    \brk!{\exi y\in\partial_{\FS}\varphi(x)}(\exi\ad\in\AD_X)\,\,
      Y=\Lambda_{\ad}y,
  \end{align}
  as desired.

  \cref{t:1ii}:
  Argue as in \cref{t:1i} and use \cref{p:2}\,\cref{p:2ii} instead
  of \cref{p:2}\,\cref{p:2i}.
\end{proof}

\begin{remark}
  In the case of convex subdifferentials,
  it was established in \cite[Proposition~5.5\,(i)]{PartI} that,
  under \cref{a:1} and for every proper $\SD$-invariant function
  $\varphi\colon\XX\to\RX$, we have
  \begin{equation}
    (\forall X\in\FH)(\forall Y\in\FH)\quad
    Y\in\partial(\varphi\circ\gamma)(X)
    \quad\Leftrightarrow\quad
    \begin{cases}
      \gamma(Y)\in\partial\varphi\brk!{\gamma(X)}\\
      (\exi\ad\in\AD)\,\,
      X=\Lambda_{\ad}\gamma(X)
      \,\,\text{and}\,\,
      Y=\Lambda_{\ad}\gamma(Y),
    \end{cases}
  \end{equation}
  where $\partial$ stands for the convex subdifferential operator.
  By contrast,
  one cannot assert that
  $Y\in\partial_{\FS}(\varphi\circ\gamma)(X)$
  $\Rightarrow$
  $\gamma(Y)\in\partial_{\FS}\varphi(\gamma(X))$.
  To see this, consider the setting of \cref{ex:4} with
  $\KB=\RR$ and $N=2$, and the following function $\varphi$
  considered in \cite[Example~3.2]{Daniilidis08}:
  \begin{equation}
    \varphi\colon\RR^2\to\RR\colon(\xi_1,\xi_2)\mapsto\xi_1\xi_2.
  \end{equation}
  Then $\varphi\circ\lambda$ is Fr\'echet differentiable at
  $X=\Diag(1,2)$ with
  $\nabla(\varphi\circ\lambda)(X)=\Diag(2,1)$.
  On the other hand,
  $\nabla\varphi\brk{\lambda(X)}
  =\nabla\varphi(2,1)
  =(1,2)
  \neq\lambda\brk{\Diag(2,1)}$.
\end{remark}

\begin{remark}
  \label{r:6}
  \cref{t:1} unifies several well-known results
  describing the subdifferentials of a spectral
  function through those of the associated invariant function
  and the spectral mapping.
  More precisely, in the settings of \cref{ex:3,ex:4,ex:5},
  \cref{t:1} reduces to \cite[Theorem~17]{Lourenco20}
  (see \cite[Theorems~8.5 and 9.1]{Sendov07} for special cases),
  \cite[Theorem~6]{Lewis99b}
  (see also \cite[Theorem~4.2]{Drus18}),
  and \cite[Theorem~7.1]{Lewis05a}, respectively.
  To the best of our knowledge,
  \cref{t:1} is new in the settings of \cref{ex:2,ex:6};
  see also the discussion in the paragraph following
  \cite[Theorem~7.2]{Lewis03}.
\end{remark}

As an application of \cref{t:1}, we establish a generalization of the
so-called \emph{commutation principle} established in
\cite{Gowda17,Ramirez13}.

\begin{corollary}
  \label{c:4}
  Suppose that \cref{a:1} is in force.
  Let $\varphi\colon\XX\to\RX$ be proper and $\SD$-invariant,
  let $\Psi\colon\FH\to\RR$ be Fr\'echet differentiable,
  and let $X\in\FH$ be such that $\gamma(X)\in\dom\varphi$.
  Suppose that $X$ is a local minimizer of $\varphi\circ\gamma+\Psi$.
  Then there exist $y\in\partial_{\FS}\varphi(\gamma(X))$ and
  $\ad\in\AD_X$ such that
  ${-}\nabla\Psi(X)=\Lambda_{\ad}y$.
\end{corollary}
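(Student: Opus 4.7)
The plan is to apply Fermat's rule to the local minimizer, exploit the Fréchet subdifferential sum rule for a smooth perturbation, and then invoke \cref{t:1}\,\cref{t:1i} to extract the claimed decomposition.

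First, since $X$ is a local minimizer of $\varphi\circ\gamma+\Psi$ and this sum takes the finite value $\varphi(\gamma(X))+\Psi(X)$ at $X$ (because $\gamma(X)\in\dom\varphi$), the generalized Fermat rule for Fréchet subgradients (see, e.g., \cite[Proposition~1.30\,(i)]{Mord06}) yields
\begin{equation}
  0\in\partial_{\FS}(\varphi\circ\gamma+\Psi)(X).
\end{equation}
Next, because $\Psi$ is Fréchet differentiable at $X$, the elementary sum rule for Fréchet subdifferentials with a smooth summand gives
\begin{equation}
  \partial_{\FS}(\varphi\circ\gamma+\Psi)(X)
  =\partial_{\FS}(\varphi\circ\gamma)(X)+\nabla\Psi(X),
\end{equation}
so that $-\nabla\Psi(X)\in\partial_{\FS}(\varphi\circ\gamma)(X)$.

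Finally, I apply \cref{t:1}\,\cref{t:1i} to the $\SD$-invariant function $\varphi$: there exist $y\in\partial_{\FS}\varphi(\gamma(X))$ and $\ad\in\AD_X$ such that $-\nabla\Psi(X)=\Lambda_{\ad}y$, which is exactly the desired conclusion.

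There is essentially no obstacle in this argument: it is a direct concatenation of three standard tools (Fermat, the smooth Fréchet sum rule, and \cref{t:1}\,\cref{t:1i}). The only point worth flagging is that one must record that $\varphi(\gamma(X))\in\RR$ (ensured by the hypothesis $\gamma(X)\in\dom\varphi$ together with properness of $\varphi$, which rules out the value $-\infty$) so that both $\partial_{\FS}(\varphi\circ\gamma)(X)$ and the Fermat rule are meaningful; this also makes the invocation of \cref{t:1}\,\cref{t:1i} legitimate, since the characterization there is non-vacuous precisely when $\varphi(\gamma(X))$ is finite.
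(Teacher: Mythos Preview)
Your proof is correct and follows exactly the same approach as the paper: Fermat's rule at the local minimizer, the Fr\'echet sum rule with a smooth summand to obtain $-\nabla\Psi(X)\in\partial_{\FS}(\varphi\circ\gamma)(X)$, and then \cref{t:1}\,\cref{t:1i} to extract $y$ and $\ad$. The paper's proof is just a terser version of what you wrote, citing \cite[Propositions~1.114 and 1.107(i)]{Mord06} for the Fermat/sum-rule step.
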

\begin{proof}
  Since $X\in\dom(\varphi\circ\gamma+\Psi)$,
  we derive from the sum rule
  (\cite[Propositions~1.114 and 1.107(i)]{Mord06})
  that
  $0
  \in\partial_{\FS}(\varphi\circ\gamma+\Psi)(X)
  =\partial_{\FS}(\varphi\circ\gamma)(X)+\nabla\Psi(X)$.
  Now apply \cref{t:1}\,\cref{t:1i}.
\end{proof}

\begin{remark}
  \label{r:7}
  \cref{c:4} brings together and extends two commutation principles
  found in the literature, while providing a stronger conclusion even in
  those particular settings:
  \begin{enumerate}
    \item\label{r:7i}
      Consider the normal decomposition system framework \cref{ex:2}.
      By specializing \cref{c:4} to the case where
      $\XD=\FH$, we obtain an extension of
      \cite[Theorem~1.3]{Gowda17} (see also \cite[Example~5.7]{PartI})
      to the general nonconvex setting.
    \item\label{r:7ii}
      In the Euclidean Jordan algebra framework of \cref{ex:3},
      the conclusion of \cref{c:4} reads: There exist
      $y=(\eta_i)_{1\leq i\leq N}\in\partial_{\FS}\varphi(\lambda(X))$
      and a Jordan frame $\ad=(A_i)_{1\leq i\leq N}\in\AD$ such that
      \begin{equation}
        X=\sum_{i=1}^N\lambda_i(X)A_i
        \quad\text{and}\quad
        {-}\nabla\Psi(X)
        =\Lambda_{\ad}y
        =\sum_{i=1}^N\eta_iA_i.
      \end{equation}
      At the same time, for every $i$ and $j$ in $\set{1,\ldots, N}$,
      the operators
      $L_i\colon\FH\to\FH\colon Z\mapsto A_i\circledast Z$ and
      $L_j\colon\FH\to\FH\colon Z\mapsto A_j\circledast Z$ satisfy
      $L_i\circ L_j=L_j\circ L_i$
      \cite[Lemma~IV.1.3]{Faraut94}. In turn, we deduce that
      \begin{align}
        (\forall Z\in\FH)\quad
        \nabla\Psi(X)\circledast(X\circledast Z)
        &={-}\sum_{\substack{1\leq i\leq N\\ 1\leq j\leq N}}
        \lambda_i(X)\eta_jA_j\circledast(A_i\circledast Z)
        \nonumber\\
        &={-}\sum_{\substack{1\leq i\leq N\\ 1\leq j\leq N}}
        \lambda_i(X)\eta_jA_i\circledast(A_j\circledast Z)
        \nonumber\\
        &=X\circledast(\nabla\Psi(X)\circledast Z).
      \end{align}
      We thus recover \cite[Theorem~7]{Ramirez13},
      which states that the operators
      $Z\mapsto\nabla\Psi(X)\circledast Z$ and
      $Z\mapsto X\circledast Z$ commute (with respect to composition).
  \end{enumerate}
\end{remark}

We end this subsection with a strengthening of the inclusions $\supset$
in \cref{t:1}. In essence, given prior knowledge of subgradients of an
invariant function at a point $x\in\XX$, it allows to construct those of
the induced spectral function at the points
$\set{\Lambda_{\ad}x}_{\ad\in\AD}$.

\begin{proposition}
  \label{p:45}
  Suppose that \cref{a:1} is in force.
  Let $\varphi\colon\XX\to\RXX$ be $\SD$-invariant,
  let $x$ and $y$ be in $\XX$, and let $\ad\in\AD$.
  Then the following hold:
  \begin{enumerate}
    \item\label{p:45i}
      $y\in\partial_{\FS}\varphi(x)$ if and only if
      $\Lambda_{\ad}y\in\partial_{\FS}(\varphi\circ\gamma)
      (\Lambda_{\ad}x)$.
    \item\label{p:45ii}
      Suppose that $y\in\partial_{\LS}\varphi(x)$. Then
      $\Lambda_{\ad}y\in\partial_{\LS}(\varphi\circ\gamma)
      (\Lambda_{\ad}x)$.
  \end{enumerate}
\end{proposition}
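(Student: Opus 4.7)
The plan is to deduce both assertions from the already-established geometric result \cref{p:6}, combined with the epigraphical characterization \cref{e:Nepi} of Fréchet subdifferentials and the product-space construction of \cref{ex:7}, exactly as \cref{t:1} is deduced from \cref{p:2}.

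For \cref{p:45i}, I would work in the spectral decomposition system $(\XX\oplus\RR,\SD,\boldsymbol{\gamma},(\boldsymbol{\Lambda}_{\ad})_{\ad\in\AD})$ for $\FH\oplus\RR$ from \cref{ex:7}. First I would note that the statement is trivial unless $\varphi(x)\in\RR$, so I may assume this. Since $\varphi$ is $\SD$-invariant, \cref{p:8}\,\cref{p:8ii} gives $\varphi(x)=\varphi(\gamma(\Lambda_{\ad}x))=(\varphi\circ\gamma)(\Lambda_{\ad}x)$, and the identity \cref{e:41wq} gives $\boldsymbol{\gamma}^{-1}(\epi\varphi)=\epi(\varphi\circ\gamma)$. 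The epigraph $\epi\varphi$ is $\SD$-invariant in $\XX\oplus\RR$. Now I apply \cref{p:6} to the product system with the $\SD$-invariant set $\epi\varphi$, at the points $(x,\varphi(x))$ and $(y,-1)$ with parameter $\ad$: this yields the equivalence
\begin{equation*}
  (y,-1)\in N_{\FS}\bigl((x,\varphi(x));\epi\varphi\bigr)
  \iff
  \boldsymbol{\Lambda}_{\ad}(y,-1)\in N_{\FS}\bigl(\boldsymbol{\Lambda}_{\ad}(x,\varphi(x));\boldsymbol{\gamma}^{-1}(\epi\varphi)\bigr).
\end{equation*}
Unpacking $\boldsymbol{\Lambda}_{\ad}(x,\varphi(x))=(\Lambda_{\ad}x,(\varphi\circ\gamma)(\Lambda_{\ad}x))$ and $\boldsymbol{\Lambda}_{\ad}(y,-1)=(\Lambda_{\ad}y,-1)$, and then applying \cref{e:Nepi} on each side, I obtain exactly the equivalence $y\in\partial_{\FS}\varphi(x)\Leftrightarrow\Lambda_{\ad}y\in\partial_{\FS}(\varphi\circ\gamma)(\Lambda_{\ad}x)$.

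For \cref{p:45ii}, I would argue by a standard sequential closure argument. Let $y\in\partial_{\LS}\varphi(x)$, so by definition there is a sequence $(x_n,y_n)_{n\in\NN}$ in $\gra\partial_{\FS}\varphi$ with $x_n\to x$, $\varphi(x_n)\to\varphi(x)$, and $y_n\to y$. By \cref{p:45i} applied at each $x_n$, we have $\Lambda_{\ad}y_n\in\partial_{\FS}(\varphi\circ\gamma)(\Lambda_{\ad}x_n)$ for every $n$. Since $\Lambda_{\ad}$ is a continuous linear isometry, $\Lambda_{\ad}x_n\to\Lambda_{\ad}x$ and $\Lambda_{\ad}y_n\to\Lambda_{\ad}y$; moreover, using \cref{p:8}\,\cref{p:8ii} to write $\varphi=\varphi\circ\gamma\circ\Lambda_{\ad}$, we get $(\varphi\circ\gamma)(\Lambda_{\ad}x_n)=\varphi(x_n)\to\varphi(x)=(\varphi\circ\gamma)(\Lambda_{\ad}x)$. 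By the definition of the limiting subdifferential, this gives $\Lambda_{\ad}y\in\partial_{\LS}(\varphi\circ\gamma)(\Lambda_{\ad}x)$.

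The main technical point is the careful bookkeeping in \cref{p:45i}: identifying $\boldsymbol{\gamma}^{-1}(\epi\varphi)$ with $\epi(\varphi\circ\gamma)$ via \cref{e:41wq} and checking that the scalar coordinate is preserved (so the slope $-1$ stays $-1$ under $\boldsymbol{\Lambda}_{\ad}$), which is exactly the reason why the product-space construction in \cref{ex:7} was designed to leave the $\RR$-coordinate fixed. Once this is set up, the result is an immediate corollary of \cref{p:6}, and part~\cref{p:45ii} then follows mechanically from the definition of the limiting subdifferential.
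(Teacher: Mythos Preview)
Your proposal is correct and follows essentially the same route as the paper: both parts are reduced to \cref{p:6} in the product system of \cref{ex:7} via \cref{e:41wq} and \cref{e:Nepi}, using \cref{p:8}\,\cref{p:8ii} to match function values, and part~\cref{p:45ii} is then obtained by the obvious sequential argument from part~\cref{p:45i}.
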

\begin{proof}
  \cref{p:45i}:
  As in the proof of \cref{t:1},
  we consider the spectral decomposition system
  $(\XX\oplus\RR,\SD,\boldsymbol{\gamma},
  (\boldsymbol{\Lambda}_{\ad})_{\ad\in\AD})$ for $\FH\oplus\RR$
  constructed in \cref{ex:7}.
  Then $\epi\varphi$ is an $\SD$-invariant subset of $\XX\oplus\RR$ and
  $\boldsymbol{\gamma}^{-1}(\epi\varphi)
    =\epi(\varphi\circ\gamma)$; see \cref{e:41wq}.
  We derive from \cref{e:Nepi},
  \cref{p:6} (applied to $(\XX\oplus\RR,\SD,\boldsymbol{\gamma},
  (\boldsymbol{\Lambda}_{\ad})_{\ad\in\AD})$
  and the $\SD$-invariant set $\epi\varphi$),
  and the identity $\varphi=\varphi\circ\gamma\circ\Lambda_{\ad}$
  (\cref{p:8}\,\cref{p:8ii}) that
  \begin{align}
    y\in\partial_{\FS}\varphi(x)
    &\Leftrightarrow
    \varphi(x)\in\RR
    \,\,\text{and}\,\,
    (y,-1)\in N_{\FS}\brk!{\brk{x,\varphi(x)};\epi\varphi}
    \nonumber\\
    &\Leftrightarrow
    \varphi(x)\in\RR
    \,\,\text{and}\,\,
    \boldsymbol{\Lambda}_{\ad}(y,-1)\in
    N_{\FS}\brk!{\boldsymbol{\Lambda}_{\ad}\brk{x,\varphi(x)};
      \boldsymbol{\gamma}^{-1}(\epi\varphi)}
    \nonumber\\
    &\Leftrightarrow
    \varphi(x)\in\RR
    \,\,\text{and}\,\,
    (\Lambda_{\ad}y,-1)\in
    N_{\FS}\brk!{(\Lambda_{\ad}x,\varphi(x));\epi(\varphi\circ\gamma)}
    \nonumber\\
    &\Leftrightarrow
    (\varphi\circ\gamma)(\Lambda_{\ad}x)\in\RR
    \,\,\text{and}\,\,
    (\Lambda_{\ad}y,-1)\in
    N_{\FS}\brk!{\brk!{\Lambda_{\ad}x,(\varphi\circ\gamma)(\Lambda_{\ad}x)};
      \epi(\varphi\circ\gamma)}
    \nonumber\\
    &\Leftrightarrow
    \Lambda_{\ad}y\in\partial_{\FS}(\varphi\circ\gamma)(\Lambda_{\ad}x),
  \end{align}
  as announced.

  \cref{p:45ii}:
  Use the definition of limiting subdifferentials,
  \cref{p:45i}, and the identity
  $\varphi\circ\gamma\circ\Lambda_{\ad}=\varphi$
  (\cref{p:8}\,\cref{p:8ii}).
\end{proof}

\begin{remark}
  \label{r:2}
  Here are several noteworthy instances of \cref{p:45}\,\cref{p:45i}
  found in the literature.
  \begin{enumerate}
    \item
      Consider the setting of \cref{ex:4},
      where $\KB$ is either $\RR$ or $\CC$,
      and let $\varphi\colon\RR^N\to\RXX$ be $\PS^N$-invariant,
      that is, permutation-invariant.
      Then \cref{p:45}\,\cref{p:45i} yields
      \begin{multline}
        \brk{\forall x\in\RR^N}
        \brk{\forall y\in\RR^N}
        \brk!{\forall U\in\US^N(\KB)}\\
        y\in\partial_{\FS}\varphi(x)
        \quad\Rightarrow\quad
        U(\Diag y)U^*\in
        \partial_{\FS}(\varphi\circ\lambda)\brk!{U(\Diag x)U^*},
      \end{multline}
      which is precisely \cite[Theorem~5]{Lewis99b}.
      In \cite{Lewis99b}, this implication serves as a key step
      in deriving the subdifferential formula \cref{e:y21z}
      within the setting of \cref{ex:4}, and its proof
      relies on a technically involved result concerning directional
      derivatives of the eigenvalue mapping $\lambda$.
    \item
      Likewise, in the setting of \cref{ex:5}, \cref{p:45}\,\cref{p:45i}
      yields \cite[Theorem~6.10]{Lewis05a}. As in the case of
      \cite{Lewis99b}, this implication is a key ingredient in the
      derivation of the corresponding subdifferential formula
      in \cite{Lewis05a}, and its proof draws on technically involved
      properties of the singular value mapping.
    \item
      In the Euclidean Jordan algebra framework of
      \cref{ex:3}, \cref{p:45}\,\cref{p:45i} yields
      \cite[Proposition~15]{Lourenco20}, which -- in line with the
      approaches of \cite{Lewis99b,Lewis05a} --
      plays a key role in the derivation of \cref{e:y21z} within that
      setting.
  \end{enumerate}
\end{remark}

\subsection{Fr\'echet differentiability of spectral functions}

The results of \cref{sec:41} allow us to fully characterize the Fr\'echet
differentiability of a spectral function through that of the associated
invariant function.
Recall that, given a Euclidean space $\HH$, a function
$f\colon\HH\to\RXX$ is Fr\'echet differentiable at a point $x\in\HH$ in
which $f(x)\in\RR$ if there exists a unique point in $\HH$, denoted by
$\nabla f(x)$, such that
\begin{equation}
  \lim_{\substack{z\to x\\ z\neq x}}\frac{\abs!{
      f(z)-f(x)-\scal{z-x}{\nabla f(x)}}}{\norm{z-x}}=0;
\end{equation}
moreover, a characterization of Fr\'echet differentiability
in terms of Fr\'echet subdifferentiability is
\begin{equation}
  \label{e:fgf0}
  \text{$f$ is Fr\'echet differentiable at $x$}
  \quad\Leftrightarrow\quad
  \partial_{\FS}(\pm f)(x)\neq\emp;
\end{equation}
see, e.g., \cite[Proposition~1.87]{Mord06}.

\begin{corollary}
  \label{c:5}
  Suppose that \cref{a:1} is in force and let $\varphi\colon\XX\to\RXX$
  be $\SD$-invariant.
  \begin{enumerate}
    \item\label{c:5i}
      Let $x\in\XX$ and $\ad\in\AD$.
      Suppose that $\varphi(x)\in\RR$.
      Then $\varphi\circ\gamma$ is Fr\'echet differentiable at
      $\Lambda_{\ad}x$ if and only if $\varphi$ is Fr\'echet
      differentiable at $x$, in which case
      \begin{equation}
        \nabla(\varphi\circ\gamma)(\Lambda_{\ad}x)
        =\Lambda_{\ad}\brk!{\nabla\varphi(x)}.
      \end{equation}
    \item\label{c:5ii}
      Let $X\in\FH$ and suppose that $\varphi(\gamma(X))\in\RR$.
      Then $\varphi\circ\gamma$ is Fr\'echet differentiable at $X$ if and
      only if $\varphi$ is Fr\'echet differentiable at $\gamma(X)$, in
      which case
      \begin{equation}
        \label{e:d2a5}
        (\forall\ad\in\AD_X)\quad
        \nabla(\varphi\circ\gamma)(X)
        =\Lambda_{\ad}\brk!{\nabla\varphi\brk!{\gamma(X)}}.
      \end{equation}
  \end{enumerate}
\end{corollary}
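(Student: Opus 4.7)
The plan is to reduce the Fréchet differentiability question to a Fréchet subdifferential question via the characterization \cref{e:fgf0}, and then apply \cref{p:45}\,\cref{p:45i}, which already provides a one-to-one correspondence between Fréchet subgradients of $\varphi$ at $x$ and Fréchet subgradients of $\varphi\circ\gamma$ at $\Lambda_{\ad}x$. The crucial observation is that if $\varphi$ is $\SD$-invariant, then so is $-\varphi$, so the machinery of \cref{p:45} can be applied symmetrically to both $\pm\varphi$.

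For \cref{c:5i}, the strategy is as follows. Since $\varphi(x)\in\RR$, the characterization \cref{e:fgf0} states that $\varphi$ is Fréchet differentiable at $x$ if and only if both $\partial_{\FS}\varphi(x)$ and $\partial_{\FS}(-\varphi)(x)$ are nonempty; analogously for $\varphi\circ\gamma$ at $\Lambda_{\ad}x$. Applying \cref{p:45}\,\cref{p:45i} to the $\SD$-invariant functions $\varphi$ and $-\varphi$ and using that $\Lambda_{\ad}$ is a linear isometry (hence injective), each nonemptyness condition transfers precisely to the corresponding one for $\varphi\circ\gamma$ at $\Lambda_{\ad}x$. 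This yields the equivalence. For the gradient formula, assuming Fréchet differentiability, both $\partial_{\FS}\varphi(x)$ and $\partial_{\FS}(\varphi\circ\gamma)(\Lambda_{\ad}x)$ reduce to singletons $\{\nabla\varphi(x)\}$ and $\{\nabla(\varphi\circ\gamma)(\Lambda_{\ad}x)\}$, respectively; \cref{p:45}\,\cref{p:45i} then forces $\Lambda_{\ad}(\nabla\varphi(x))=\nabla(\varphi\circ\gamma)(\Lambda_{\ad}x)$.

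For \cref{c:5ii}, property \cref{d:1c} of \cref{d:1} ensures $\AD_X\neq\emp$. Fix any $\ad\in\AD_X$, so $X=\Lambda_{\ad}\gamma(X)$. Applying \cref{c:5i} with the point $x=\gamma(X)$ and this $\ad$ immediately gives both the equivalence and the identity $\nabla(\varphi\circ\gamma)(X)=\Lambda_{\ad}(\nabla\varphi(\gamma(X)))$ for this particular $\ad$. Since the same argument applies to every $\ad\in\AD_X$, the formula \cref{e:d2a5} holds universally (and, as a byproduct, $\Lambda_{\ad}(\nabla\varphi(\gamma(X)))$ turns out to be independent of the choice of $\ad\in\AD_X$).

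There is essentially no obstacle here given the preceding results: all of the real work has been done in \cref{p:45}\,\cref{p:45i} and in the general characterization \cref{e:fgf0}. The only points requiring minimal care are verifying that $-\varphi$ is $\SD$-invariant (immediate) and recognizing that the gradient formula in \cref{c:5ii} must be consistent across all elements of $\AD_X$, which follows automatically from the uniqueness of the Fréchet derivative.
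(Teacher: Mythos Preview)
Your proof is correct and follows essentially the same approach as the paper: both rely on \cref{p:45}\,\cref{p:45i} applied to $\pm\varphi$ together with the characterization \cref{e:fgf0}, and both derive \cref{c:5ii} by specializing \cref{c:5i} to $x=\gamma(X)$. The only minor difference is that for the implication ``$\varphi\circ\gamma$ differentiable at $\Lambda_{\ad}x$ $\Rightarrow$ $\varphi$ differentiable at $x$'' the paper invokes the chain rule directly via the identity $\varphi=(\varphi\circ\gamma)\circ\Lambda_{\ad}$ from \cref{p:8}\,\cref{p:8ii}, whereas you use the biconditional in \cref{p:45}\,\cref{p:45i} symmetrically for both directions; either route is fine.
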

\begin{proof}
  \cref{c:5i}:
  If $\varphi\circ\gamma$ is Fr\'echet differentiable at
  $\Lambda_{\ad}x$, then the Fr\'echet differentiability of $\varphi$
  at $x$ follows from the identity
  $\varphi=(\varphi\circ\gamma)\circ\Lambda_{\ad}$
  (\cref{p:8}\,\cref{p:8ii}) and the chain rule.
  Now assume that $\varphi$ is Fr\'echet differentiable at $x$.
  Then $\pm\nabla\varphi(x)\in\partial_{\FS}(\pm\varphi)(x)$
  \cite[Exercise~8.8(a)]{Rock09}.
  Hence, applying \cref{p:45}\,\cref{p:45i} to the functions
  $\pm\varphi$ and using the linearity of $\Lambda_{\ad}$, we obtain
  $\pm\Lambda_{\ad}\brk{\nabla\varphi(x)}
  \in\partial_{\FS}(\pm\varphi\circ\gamma)(\Lambda_{\ad}x)$.
  Thus, we conclude via \cref{e:fgf0} that $\varphi\circ\gamma$ is
  Fr\'echet differentiable at $\Lambda_{\ad}x$ and that
  $\nabla(\varphi\circ\gamma)(\Lambda_{\ad}x)=
  \Lambda_{\ad}\brk{\nabla\varphi(x)}$.

  \cref{c:5ii}:
  Apply \cref{c:5i} with $x=\gamma(X)$.
\end{proof}

\begin{remark}
  \label{r:4}
  Let us relate \cref{c:5} to existing results
  on the Fr\'echet differentiability of spectral functions.
  \begin{enumerate}
    \item In the Euclidean Jordan algebra framework of \cref{ex:3}, we
      recover at once \cite[Theorem~38]{Baes07} and
      \cite[Theorem~4.1]{Sun08} from \cref{c:5}\,\cref{c:5ii}.
    \item In the case of Hermitian matrices (\cref{ex:4}),
      \cref{c:5}\,\cref{c:5i} yields
      \cite[Theorem~2.4 and Corollary~2.5]{Lewis96c}, while
      \cref{c:5}\,\cref{c:5ii} yields \cite[Theorem~1.1]{Lewis96c}.
  \end{enumerate}
\end{remark}

\subsection{Clarke subdifferentials of spectral functions}

For locally Lipschitz continuous functions, the Clarke subdifferential (or
generalized gradient) is especially useful in optimization due to its strong
calculus and (at least in finite dimensions) explicit characterization. In
particular, it serves as a unifying framework both for convex and for strictly
differentiable functionals and covers, e.g., the composition of nonsmooth
convex functionals and continuously differentiable operators common in
nonsmooth optimal control of differential equations; see
\cite[Chapter~2]{Clarke90}. In finite dimensions, the Clarke subdifferential
also furnishes generalized derivatives that can be used in superlinearly
convergent semismooth Newton methods; see \cite{Mifflin:1977,Qi:1993,Qi:1993a}
as well as \cite[Chapter 14]{ClasonValkonen}.

We first recall the definition. Let $\HH$ be a Euclidean space and let
$f\colon\HH\to\RXX$ be locally Lipschitz continuous near a point $x\in\HH$,
that is, $f(x)\in\RR$ and there exist $\varepsilon\in\zeroun$ and
$\kappa\in\RP$ such that
\begin{equation}
  \brk!{\forall y\in B(x;\varepsilon)}
  \brk!{\forall z\in B(x;\varepsilon)}\quad
  \abs{f(y)-f(z)}\leq\kappa\norm{y-z},
\end{equation}
where $B(x;\varepsilon)=\menge{y\in\HH}{\norm{x-y}\leq\varepsilon}$.
Given $u\in\HH$, the Clarke generalized directional derivative
of $f$ at $x$ in the direction $u$ is
\begin{equation}
  f^{\mathord{\circ}}(x;u)
  =\limsup_{\substack{z\to x\\\alpha\downarrow 0}}
  \frac{f(z+\alpha u)-f(z)}{\alpha}.
\end{equation}
In turn, the Clarke subdifferential of $f$ at $x$ is
\begin{equation}
  \partial_{\CS}f(x)
  =\menge{y\in\HH}{(\forall u\in\HH)\,\,
    \scal{u}{y}\leq f^{\mathord{\circ}}(x;u)},
\end{equation}
which is always nonempty, compact, and convex
\cite[Proposition~2.1.2\,(a)]{Clarke90}.
The following characterization of Clarke subdifferentials will be
crucial in our analysis of this section and \cref{sec:5}.

\begin{lemma}
  \label{l:2}
  Let $\HH$ be a Euclidean space,
  let $f\colon\HH\to\RXX$ be locally Lipschitz continuous near a point
  $x\in\HH$,
  and denote by $S_f$ the set of points at which $f$ is Fr\'echet
  differentiable.
  Then the following hold:
  \begin{enumerate}
    \item\label{l:2i}
      $S_f\neq\emp$.
    \item\label{l:2ii}
      Let $\Omega\subset\HH$ be a Lebesgue null set,
      i.e., a Lebesgue measurable subset of $\HH$ of Lebesgue measure
      $0$, and let $D$ be the set of all $y\in\HH$ for which there
      exists a sequence $(x_n)_{n\in\NN}$ in $S_f\smallsetminus\Omega$
      such that $x_n\to x$ and $\nabla f(x_n)\to y$.
      Then $\partial_{\CS}f(x)=\conv D\neq\emp$.
  \end{enumerate}
\end{lemma}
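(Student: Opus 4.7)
The plan is to derive both parts from two pillars of Lipschitz analysis --- Rademacher's theorem and Clarke's gradient formula --- with no original argument required beyond matching the statement to these classical results.

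For part \cref{l:2i}, I would invoke Rademacher's theorem: a function that is locally Lipschitz continuous on an open subset of a finite-dimensional Euclidean space is Fréchet differentiable Lebesgue-almost everywhere. Applied to $f$ on a small open ball $B(x;\varepsilon)$ on which $f$ is $\kappa$-Lipschitz, this gives that the complement of $S_f$ in $B(x;\varepsilon)$ has zero Lebesgue measure. Since $B(x;\varepsilon)$ itself has positive measure, $S_f$ must be nonempty (and in fact dense near $x$).

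For part \cref{l:2ii}, the key observations are three. First, because $\Omega$ has Lebesgue measure zero, the set $S_f\setminus\Omega$ still has full Lebesgue measure on $B(x;\varepsilon)$; in particular, one can choose sequences in $S_f\setminus\Omega$ converging to $x$. Second, on $S_f\cap B(x;\varepsilon)$ one has $\norm{\nabla f(y)}\leq\kappa$, so the Bolzano--Weierstrass theorem yields that $D$ is nonempty. Third, the identity $\partial_{\CS}f(x)=\conv D$ is precisely the content of \cite[Theorem~2.5.1]{Clarke90}, which already establishes the gradient formula allowing the exclusion of an arbitrary Lebesgue null set $\Omega$.

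The step that would require the most care if one wished to reprove the result from scratch is the inclusion $\partial_{\CS}f(x)\subset\conv D$: one must argue that every Clarke subgradient at $x$ can be realized as a convex combination of limits of classical gradients at differentiability points lying outside $\Omega$. The classical proof of this inclusion proceeds via the mean value theorem along almost every line segment through $x$, combined with Lebesgue density arguments to route such segments through $S_f\setminus\Omega$. The reverse inclusion $\conv D\subset\partial_{\CS}f(x)$ is comparatively immediate from the upper semicontinuity of $\partial_{\CS}f$, the relation $\nabla f(y)\in\partial_{\CS}f(y)$ at differentiability points, and the convexity and closedness of $\partial_{\CS}f(x)$. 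Since both arguments are entirely standard, a citation to \cite[Theorem~2.5.1]{Clarke90} suffices.
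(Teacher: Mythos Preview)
Your proposal is correct and matches the paper's own proof, which simply cites Rademacher's theorem for \cref{l:2i} and \cite[Theorem~2.5.1]{Clarke90} for \cref{l:2ii}. You supply more surrounding detail (the boundedness of gradients, the density argument, the sketch of how the two inclusions go), but the substance is the same pair of citations.
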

\begin{proof}
  \cref{l:2i}:
  This is a consequence of Rademacher's theorem.

  \cref{l:2ii}:
  See \cite[Theorem~2.5.1]{Clarke90}.
\end{proof}

We can now state and prove the main result of this subsection.

\begin{proposition}
  \label{p:3}
  Suppose that \cref{a:1} is in force, and
  let $\varphi\colon\XX\to\RXX$ be $\SD$-invariant
  and locally Lipschitz continuous near the spectrum
  $\gamma(X)$ of a point $X\in\FH$. Then
  \begin{equation}
    \label{e:udqz}
    \partial_{\CS}(\varphi\circ\gamma)(X)
    =\conv\menge{\Lambda_{\ad}y}{y\in\partial_{\CS}
      \varphi\brk!{\gamma(X)}
      \,\,\text{and}\,\,\ad\in\AD_X}.
  \end{equation}
\end{proposition}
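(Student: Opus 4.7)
The plan is to characterize $\partial_{\CS}$ via limits of Fr\'echet gradients using \cref{l:2}\,\cref{l:2ii}, and then to transfer between the two subdifferentials by combining the differentiability characterization of \cref{c:5} with the compactness of $\set{\Lambda_{\ad}}_{\ad\in\AD}$ from \cref{p:54}. A preliminary observation is that $\varphi\circ\gamma$ is locally Lipschitz continuous near $X$: indeed, $\gamma$ is nonexpansive by \cref{p:8}\,\cref{p:8i}, so a local Lipschitz estimate for $\varphi$ near $\gamma(X)$ transfers to $\varphi\circ\gamma$ near $X$, making \cref{l:2} applicable at $X$.

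For the inclusion ``$\supset$'', since $\partial_{\CS}(\varphi\circ\gamma)(X)$ is convex, it suffices to show that $\Lambda_{\ad}y\in\partial_{\CS}(\varphi\circ\gamma)(X)$ whenever $y\in\partial_{\CS}\varphi(\gamma(X))$ and $\ad\in\AD_X$. I would write $y=\sum_i\mu_iy_i$ as a convex combination via \cref{l:2}\,\cref{l:2ii}, where each $y_i$ arises as a limit $\nabla\varphi(x_n^{(i)})\to y_i$ along $x_n^{(i)}\to\gamma(X)$ with $\varphi$ Fr\'echet differentiable at $x_n^{(i)}$, and set $X_n^{(i)}=\Lambda_{\ad}x_n^{(i)}$. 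Since $\ad\in\AD_X$, one has $X_n^{(i)}\to\Lambda_{\ad}\gamma(X)=X$, and \cref{c:5}\,\cref{c:5i} yields Fr\'echet differentiability of $\varphi\circ\gamma$ at $X_n^{(i)}$ together with $\nabla(\varphi\circ\gamma)(X_n^{(i)})=\Lambda_{\ad}\nabla\varphi(x_n^{(i)})\to\Lambda_{\ad}y_i$. A further application of \cref{l:2}\,\cref{l:2ii} places $\Lambda_{\ad}y_i\in\partial_{\CS}(\varphi\circ\gamma)(X)$, and convexity of the Clarke subdifferential delivers $\Lambda_{\ad}y=\sum_i\mu_i\Lambda_{\ad}y_i\in\partial_{\CS}(\varphi\circ\gamma)(X)$.

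For the inclusion ``$\subset$'', I would pick $Z\in\partial_{\CS}(\varphi\circ\gamma)(X)$ and use \cref{l:2}\,\cref{l:2ii} to write $Z=\sum_i\lambda_iZ_i$ with $Z_i=\lim_n\nabla(\varphi\circ\gamma)(X_n^{(i)})$ for some $X_n^{(i)}\to X$ at which $\varphi\circ\gamma$ is Fr\'echet differentiable. For each $n$ and $i$, select $\ad_n^{(i)}\in\AD_{X_n^{(i)}}$; by \cref{c:5}\,\cref{c:5ii}, $\varphi$ is then Fr\'echet differentiable at $\gamma(X_n^{(i)})$ with the identity $\nabla(\varphi\circ\gamma)(X_n^{(i)})=\Lambda_{\ad_n^{(i)}}\nabla\varphi(\gamma(X_n^{(i)}))$. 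Since $\Lambda_{\ad_n^{(i)}}$ is a linear isometry, the sequence $(\nabla\varphi(\gamma(X_n^{(i)})))_{n\in\NN}$ has the same norm as $(\nabla(\varphi\circ\gamma)(X_n^{(i)}))_{n\in\NN}$ and is therefore bounded. Using \cref{p:54} I would extract a subsequence with $\Lambda_{\ad_{k_n}^{(i)}}\to\Lambda_{\ad^{(i)}}$, apply \cref{p:5} to conclude $\ad^{(i)}\in\AD_X$, and pass to a further subsequence so that $\nabla\varphi(\gamma(X_{k_n}^{(i)}))\to y_i$ for some $y_i\in\XX$. Continuity of $\gamma$ (\cref{p:8}\,\cref{p:8i}) gives $\gamma(X_{k_n}^{(i)})\to\gamma(X)$, so \cref{l:2}\,\cref{l:2ii} applied to $\varphi$ places $y_i\in\partial_{\CS}\varphi(\gamma(X))$. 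Passing to the limit in the gradient identity yields $Z_i=\Lambda_{\ad^{(i)}}y_i$, so each $Z_i$ lies in the generating set on the right-hand side of~\cref{e:udqz}, and hence $Z$ lies in its convex hull.

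The main obstacle is the ``$\subset$'' direction, where the extracted limits of the spectral-decomposition selectors $\ad_n^{(i)}\in\AD_{X_n^{(i)}}$ must still correspond to an element of $\AD_X$. Without the closedness strengthening built into \cref{a:1}, nothing would guarantee that the limit of $\Lambda_{\ad_{k_n}^{(i)}}$ is itself of the form $\Lambda_{\ad^{(i)}}$ for some $\ad^{(i)}\in\AD$, let alone in $\AD_X$; it is precisely \cref{p:54} and \cref{p:5} that supply this closure and let the argument go through.
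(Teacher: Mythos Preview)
Your proposal is correct and follows essentially the same approach as the paper: both arguments use \cref{l:2}\,\cref{l:2ii} to reduce to limits of Fr\'echet gradients, transfer differentiability via \cref{c:5}, and handle the ``$\subset$'' direction by extracting convergent subsequences of $(\Lambda_{\ad_n})$ through \cref{p:54} and identifying the limit in $\AD_X$ via \cref{p:5}. The only cosmetic difference is that the paper packages the gradient-limit set as an auxiliary set $D$ and proves the inclusion at the level of generating sets before taking convex hulls, whereas you decompose a general Clarke subgradient into a convex combination up front.
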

\begin{proof}
  Denote by $D$ the set of all $y\in\XX$ for which there exists a
  sequence $(x_n)_{n\in\NN}$ in $\XX$ such that
  \begin{equation}
    \begin{cases}
      x_n\to\gamma(X)\\
      \text{for every $n\in\NN$,
        $\varphi$ is Fr\'echet differentiable at $x_n$}\\
      \nabla\varphi(x_n)\to y.
    \end{cases}
  \end{equation}
  In the light of \cref{l:2},
  $\partial_{\CS}\varphi\brk{\gamma(X)}=\conv D\neq\emp$,
  which leads to
  \begin{equation}
    \label{e:xt43}
    \conv\menge{\Lambda_{\bd}v}{v\in D\,\,\text{and}\,\,\bd\in\AD_X}
    =\conv\menge{\Lambda_{\bd}v}{v\in\partial_{\CS}\varphi\brk!{\gamma(X)}
      \,\,\text{and}\,\,\bd\in\AD_X}.
  \end{equation}
  The nonexpansiveness of $\gamma$ (\cref{p:8}\,\cref{p:8i})
  entails that $\varphi\circ\gamma$ is
  locally Lipschitz continuous near $X$.
  Now let $Y\in\FH$ be a point for which there exists a sequence
  $(X_n)_{n\in\NN}$ in $\FH$ such that
  \begin{equation}
    \label{e:o29z}
    \begin{cases}
      X_n\to X\\
      \text{for every $n\in\NN$, $\varphi\circ\gamma$ is Fr\'echet
        differentiable at $X_n$}\\
      \nabla(\varphi\circ\gamma)(X_n)\to Y
    \end{cases}
  \end{equation}
  (such a point exists thanks to \cref{l:2}\,\cref{l:2ii}).
  Since $\gamma$ is nonexpansive,
  \begin{equation}
    \label{e:c2su}
    \gamma(X_n)\to\gamma(X).
  \end{equation}
  Next, for every $n\in\NN$, take $\ad_n\in\AD_{X_n}$.
  By \cref{c:5}\,\cref{c:5ii},
  \begin{equation}
    \label{e:c2sv}
    (\forall n\in\NN)\quad
    \begin{cases}
      \text{$\varphi$ is Fr\'echet differentiable at $\gamma(X_n)$}\\
      \nabla(\varphi\circ\gamma)(X_n)
      =\Lambda_{\ad_n}\brk!{\nabla\varphi\brk!{\gamma(X_n)}}.
    \end{cases}
  \end{equation}
  Hence, because $(\Lambda_{\ad_n})_{n\in\NN}$ are linear isometries,
  \begin{equation}
    \sup_{n\in\NN}\norm!{\nabla\varphi\brk!{\gamma(X_n)}}
    =\sup_{n\in\NN}\norm!{\nabla(\varphi\circ\gamma)(X_n)}
    <\pinf.
  \end{equation}
  Therefore, appealing to \cref{p:54}, we obtain a strictly increasing
  sequence $(k_n)_{n\in\NN}$ in $\NN$, a point $y\in\XX$, and an element
  $\ad\in\AD$ such that
  \begin{equation}
    \label{e:c2sw}
    \nabla\varphi\brk!{\gamma(X_{k_n})}\to y
    \quad\text{and}\quad
    \Lambda_{\ad_{k_n}}\to\Lambda_{\ad}.
  \end{equation}
  In turn, we derive from \cref{e:o29z,e:c2sv} that
  \begin{equation}
    Y
    =\lim\nabla(\varphi\circ\gamma)(X_{k_n})
    =\lim\Lambda_{\ad_{k_n}}\brk!{\nabla\varphi\brk!{\gamma(X_{k_n})}}
    =\Lambda_{\ad}y.
  \end{equation}
  At the same time, combining \cref{e:c2su,e:c2sv,e:c2sw} yields
  $y\in D$, while \cref{e:o29z,p:5} ensure that $\ad\in\AD_X$.
  Thus, we deduce from \cref{l:2}\,\cref{l:2ii}
  and \cref{e:xt43} that
  \begin{align}
    \partial_{\CS}(\varphi\circ\gamma)(X)
    &\subset\conv\menge{\Lambda_{\bd}v}{v\in D\,\,\text{and}\,\,\bd\in\AD_X}
    \nonumber\\
    &=\conv\menge{\Lambda_{\bd}v}{v\in\partial_{\CS}\varphi\brk!{\gamma(X)}
      \,\,\text{and}\,\,\bd\in\AD_X}.
  \end{align}
  We now establish the converse inclusion.
  Toward this end, take $v\in D$ and $\bd\in\AD_X$, and
  let $(z_n)_{n\in\NN}$ be a sequence in $\XX$ such that
  \begin{equation}
    \begin{cases}
      z_n\to\gamma(X)\\
      \text{for every $n\in\NN$,
        $\varphi$ is Fr\'echet differentiable at $z_n$}\\
      \nabla\varphi(z_n)\to v.
    \end{cases}
  \end{equation}
  Then $\Lambda_{\bd}z_n\to\Lambda_{\bd}\gamma(X)=X$
  and $\Lambda_{\bd}(\nabla\varphi(z_n))\to\Lambda_{\bd}v$.
  On the other hand, for every $n\in\NN$, \cref{c:5}\,\cref{c:5i}
  asserts that $\varphi\circ\gamma$ is Fr\'echet
  differentiable at $\Lambda_{\bd}z_n$ with
  $\nabla(\varphi\circ\gamma)(\Lambda_{\bd}z_n)
  =\Lambda_{\bd}(\nabla\varphi(z_n))$.
  Thus, \cref{l:2}\,\cref{l:2ii} implies that
  $\Lambda_{\bd}v\in\partial_{\CS}(\varphi\circ\gamma)(X)$, and
  we conclude the proof by invoking \cref{e:xt43} and the convexity of
  $\partial_{\CS}(\varphi\circ\gamma)(X)$.
\end{proof}

\begin{remark}
  In \cref{p:3}, we do not know whether the convex hull operation
  on the right-hand side of \cref{e:udqz} can be omitted.
  However, this is indeed the case for the particular instances of
  spectral decomposition systems in \cref{ex:3}, \cref{ex:4},
  \cref{ex:5} (with $\KB\in\set{\RR,\CC}$),
  and was established in
  \cite[Theorem~21(ii)]{Lourenco20},
  \cite[Theorem~8]{Lewis99b} (see also \cite[Theorem~1.4]{Lewis96c}),
  and \cite[Theorems~3.7 and 4.2]{Lewis05a},
  respectively.
\end{remark}

\section{A generalization of Lidski\u{\i}'s theorem}
\label{sec:5}

A pivotal result in the perturbation analysis of Hermitian matrices is
Lidski\u{\i}'s theorem \cite{Lidskii50}
which quantifies how eigenvalues vary under additive perturbations.
More precisely, using the notation of \cref{ex:4}, it asserts that
\begin{equation}
  \label{e:lidskii}
  \brk!{\forall X\in\HS^N(\CC)}
  \brk!{\forall Y\in\HS^N(\CC)}\quad
  \lambda(X+Y)-\lambda(X)\in\conv\brk!{\PS^N\cdot\lambda(Y)};
\end{equation}
see also \cite[Corollary~III.4.2 and Theorem~II.1.10]{Bhatia97}.
This type of majorization has been established beyond the Hermitian
setting to encompass a range of algebraic frameworks, including
Lie-theoretic framework \cite{Berezin56},
Eaton triples \cite[Theorem~6.4]{Hill01},
singular values of rectangular matrices \cite[Theorem~3.4.5]{Horn91},
and eigenvalues in Euclidean Jordan algebras \cite[Theorem~5.1]{Jeong20}.
However, as illustrated earlier, these settings can be viewed as
spectral decomposition systems.
This thus motivates the following generalization
of Lidski\u{\i}'s theorem, which brings together these results under the
umbrella of our framework.

\begin{theorem}
  \label{t:3}
  Suppose that \cref{a:1} is in force and, in addition, that
  $\SD$ is a finite group. Then
  \begin{equation}
    (\forall X\in\FH)(\forall Y\in\FH)\quad
    \gamma(X+Y)-\gamma(X)\in\conv\brk!{\SD\cdot\gamma(Y)}.
  \end{equation}
\end{theorem}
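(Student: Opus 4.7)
The plan is to reduce the statement to a separation inequality and then exploit the Clarke subdifferential calculus developed via \cref{p:3}. Since $\SD$ is finite, the set $\conv(\SD\cdot\gamma(Y))$ is compact and convex, so by Hahn--Banach separation, $\gamma(X+Y)-\gamma(X)\in\conv(\SD\cdot\gamma(Y))$ if and only if
\begin{equation}
  \brk!{\forall c\in\XX}\quad
  \scal{c}{\gamma(X+Y)-\gamma(X)}\leq\max_{\sd\in\SD}\scal{c}{\sd\cdot\gamma(Y)}.
\end{equation}

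To establish this, I would fix $c\in\XX$ and consider $g_c\colon[0,1]\to\RR\colon t\mapsto\scal{c}{\gamma(X+tY)}$, which is Lipschitz by the nonexpansiveness of $\gamma$ (\cref{p:8}\,\cref{p:8i}). Combining Lebourg's mean value theorem with the Clarke chain rule applied to the smooth affine map $t\mapsto X+tY$ produces $t\in\zeroun$ and $W\in\partial_{\CS}f_c(X+tY)$ satisfying
\begin{equation}
  \scal{c}{\gamma(X+Y)-\gamma(X)}=g_c(1)-g_c(0)=\scal{W}{Y},
\end{equation}
where $f_c\colon\FH\to\RR\colon Z\mapsto\scal{c}{\gamma(Z)}$ is the locally Lipschitz spectral function whose associated invariant function, by \cref{p:85} and property~\cref{d:1b}, is $\psi_c\colon\XX\to\RR\colon x\mapsto\scal{c}{\tau(x)}$; this $\psi_c$ is Lipschitz since so is $\tau=\gamma\circ\Lambda_{\ad}$. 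Applying \cref{p:3} then decomposes $W=\sum_i\alpha_i\Lambda_{\ad_i}y_i$ as a convex combination with $\ad_i\in\AD_{X+tY}$ and $y_i\in\partial_{\CS}\psi_c(\gamma(X+tY))$.

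The central substep is to establish $\partial_{\CS}\psi_c(x)\subset\conv(\SD\cdot c)$ for every $x\in\XX$. For each $\sd\in\SD$, set $R_{\sd}=\menge{x\in\XX}{\tau(x)=\sd\cdot x}$; on $R_{\sd}$ one has $\psi_c(y)=\scal{c}{\sd\cdot y}=\scal{\sd^{-1}\cdot c}{y}$, so on $\inte(R_{\sd})$, $\psi_c$ coincides locally with a linear function and is Fr\'echet differentiable with gradient $\sd^{-1}\cdot c\in\SD\cdot c$. A direct argument -- any point of $R_{\sd}\smallsetminus\inte(R_{\sd})$ must also lie in some $R_{\sd'}$ with $\sd'\neq\sd$, and hence in $\ker(\sd-\sd')$ -- shows that $\XX\smallsetminus\bigcup_{\sd\in\SD}\inte(R_{\sd})$ is contained in the finite union $\bigcup_{\sd\neq\sd'}\ker(\sd-\sd')$ of proper linear subspaces, hence is Lebesgue null; \cref{l:2}\,\cref{l:2ii} then produces the claimed inclusion. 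Expanding each $y_i$ as a convex combination of elements of $\SD\cdot c$ and invoking property~\cref{d:1d} together with property~\cref{d:1b} and the $\SD$-invariance of $\tau$, for every $\sd\in\SD$ one has
\begin{equation}
  \scal{Y}{\Lambda_{\ad_i}(\sd\cdot c)}
  \leq\scal{\gamma(Y)}{\gamma(\Lambda_{\ad_i}(\sd\cdot c))}
  =\scal{\gamma(Y)}{\tau(c)}
  =\max_{\sd'\in\SD}\scal{c}{\sd'\cdot\gamma(Y)},
\end{equation}
where the last equality uses $\gamma(Y)=\tau(\gamma(Y))$ together with the identity $\max_{\sd'}\scal{c}{\sd'\cdot x}=\scal{\tau(c)}{\tau(x)}$, itself a consequence of property~\cref{d:1d} after lifting via some $\Lambda_{\ad}$. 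Taking convex combinations yields $\scal{W}{Y}\leq\max_{\sd'}\scal{c}{\sd'\cdot\gamma(Y)}$, completing the separation argument. The main obstacle is the Clarke subdifferential inclusion $\partial_{\CS}\psi_c(x)\subset\conv(\SD\cdot c)$; the finiteness of $\SD$ enters essentially here, ensuring that $\XX$ decomposes, up to a Lebesgue null set, into finitely many cells on each of which $\psi_c$ is affine with gradient in $\SD\cdot c$.
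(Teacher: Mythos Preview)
Your proof is correct and follows essentially the same architecture as the paper's: reduce to a separation inequality, apply Lebourg's mean value theorem together with \cref{p:3} to the spectral function $Z\mapsto\scal{c}{\gamma(Z)}$ with invariant function $\psi_c(x)=\scal{c}{\tau(x)}$, and feed in the key inclusion $\partial_{\CS}\psi_c(x)\subset\conv(\SD\cdot c)$ obtained via a finite cell decomposition of $\XX$ on whose interiors $\psi_c$ is affine.

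The one substantive variation is your null-set step. The paper identifies the cells as $\sd\cdot K$ with $K=\ran\tau$, invokes \cite[Proposition~3.5\,(ii)]{PartI} for the closed convex cone structure of $K$, uses \cite[Lemma~1.44\,(i)]{Livre1} and \cite[Lemma~2.1]{Niezgoda98} to show $\inte K\neq\emp$ and that the complement of the open cells is exactly $\bigcup_{\sd}\bdry(\sd\cdot K)$, and then cites \cite[Theorem~1]{Lang86} for the measure-zero boundary. Your argument is more elementary: you observe (via the continuity of $\tau=\gamma\circ\Lambda_{\ad}$ and the finiteness of $\SD$) that any boundary point of $R_{\sd}$ lies in another $R_{\sd'}$, hence in $\ker(\sd-\sd')$, and that a finite union of proper subspaces is null. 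This avoids the convexity machinery and external references entirely. One small imprecision: the kernels $\ker(\sd-\sd')$ are proper only when $\sd$ and $\sd'$ act \emph{differently} on $\XX$, which is not guaranteed by $\sd\neq\sd'$ unless the action is faithful; your boundary argument in fact produces an $\sd'$ acting differently from $\sd$ (since $x_n\notin R_{\sd}$ forces this), so the conclusion survives once you restrict the union accordingly.
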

\begin{proof}
  We employ the techniques of \cite{Lewis99a}.
  Let $\tau$ be the spectral-induced ordering mapping
  of the system $(\XX,\SD,\gamma,(\Lambda_{\ad})_{\ad\in\AD})$
  (see property~\cref{d:1b} in \cref{d:1}),
  and set
  \begin{equation}
    K=\ran\tau.
  \end{equation}
  Then $K$ is a closed convex cone in $\XX$
  \cite[Proposition~3.5\,(ii)]{PartI}.
  Furthermore, we get from \cref{e:da2t} that
  \begin{equation}
    \label{e:iym1}
    \XX=\bigcup_{\sd\in\SD}\sd\cdot K,
  \end{equation}
  where we adopt the notation
  \begin{equation}
    \brk!{\forall D\in 2^{\XX}}(\forall\sd\in\SD)\quad
    \sd\cdot D=\menge{\sd\cdot x}{x\in D}.
  \end{equation}
  Hence, applying \cite[Lemma~1.44\,(i)]{Livre1} to the finite family
  $(\sd\cdot K)_{\sd\in\SD}$ of closed subsets of $\XX$, we obtain
  \begin{equation}
    \overline{\bigcup_{\sd\in\SD}\inte(\sd\cdot K)}
    =\overline{\inte\bigcup_{\sd\in\SD}\sd\cdot K}
    =\XX.
  \end{equation}
  Since, for every $\sd\in\SD$, the mapping
  $x\mapsto\sd\cdot x$ is a homeomorphism,
  we must have
  \begin{equation}
    \inte K\neq\emp.
  \end{equation}
  In turn, since \cite[Proposition~3.3\,(iv) and (i)]{PartI} give
  \begin{equation}
    (\forall x\in K)(\forall y\in K)\quad
    \max_{\sd\in\SD}\scal{\sd\cdot x}{y}
    =\scal{x}{y},
  \end{equation}
  we infer from \cite[Lemma~2.1]{Niezgoda98}
  (applied to the finite subgroup
  $\menge{x\mapsto\sd\cdot x}{\sd\in\SD}$
  of $\OS(\XX)$ and the set $K$) that
  \begin{equation}
    \label{e:iym2}
    (\forall\sd\in\SD)(\forall\ts\in\SD)\quad
    (\sd\cdot\inte K)\cap(\ts\cdot K)\neq\emp
    \quad\Rightarrow\quad
    \brk[s]!{\,(\forall x\in K)\,\,
      \sd\cdot x=\ts\cdot x
      \,}.
  \end{equation}
  This and \cref{e:iym1} yield
  \begin{equation}
    \XX\smallsetminus\bigcup_{\sd\in\SD}\inte(\sd\cdot K)
    =\bigcap_{\sd\in\SD}\brk!{\XX\smallsetminus\inte(\sd\cdot K)}
    =\bigcup_{\sd\in\SD}\bdry(\sd\cdot K),
  \end{equation}
  where $\bdry D$ denotes the boundary of a subset $D$ of $\XX$.
  At the same time, for every $\sd\in\SD$,
  \cite[Theorem~1]{Lang86}
  implies that $\bdry(\sd\cdot K)$ is a Lebesgue null
  set. Thus, since $\SD$ is finite,
  \begin{equation}
    \label{e:7lpr}
    \XX\smallsetminus\bigcup_{\sd\in\SD}\inte(\sd\cdot K)
    \,\,\text{is a Lebesgue null set}.
  \end{equation}
  Now let $X$ and $Y$ be in $\FH$.
  In the light of \cite[Proposition~3.8]{PartI}, the desired assertion
  is equivalent to
 \begin{equation}
   \label{e:t0d0}
   (\forall z\in\XX)\quad
   \scal{\gamma(X+Y)-\gamma(X)}{z}
   \leq\scal{\gamma(Y)}{\tau(z)}.
 \end{equation}
 To prove this, take $z\in\XX$ and define an $\SD$-invariant function
 $\varphi\colon\XX\to\RR$ via
  \begin{equation}
    \label{e:k05w}
    (\forall x\in\XX)\quad
    \varphi(x)=\scal{\tau(x)}{z}.
  \end{equation}
  It follows from \cite[Proposition~3.3\,(v)]{PartI} that
  $\varphi$ is Lipschitz continuous on $\XX$.
  Therefore, since $\gamma$ is nonexpansive, $\varphi\circ\gamma$ is
  Lipschitz continuous on $\FH$.
  To proceed further, define
  \begin{equation}
    D=\bigcup_{\sd\in\SD}\inte(\sd\cdot K).
  \end{equation}
  We claim that
  \begin{equation}
    \label{e:22j0}
    \text{$\varphi$ is Fr\'echet differentiable on
      $D$ with $(\forall x\in D)$
      $\nabla\varphi(x)\in\SD\cdot z$}.
  \end{equation}
  Toward this end, fix temporarily $\sd\in\SD$ and set
  $D_{\sd}=\inte(\sd\cdot K)$. Since $x\mapsto\sd\cdot x$ is a
  homeomorphism, $D_{\sd}$ is open and $D_{\sd}=\sd\cdot\inte K$.
  Let us verify that
  \begin{equation}
    \label{e:ph02}
    (\forall x\in D_{\sd})\quad
    x=\sd\cdot\tau(x).
  \end{equation}
  Indeed, fix temporarily $x\in D_{\sd}$ and let $\ts\in\SD$ be such that
  $x=\ts\cdot\tau(x)$. Then
  $x\in(\sd\cdot\inte K)\cap(\ts\cdot K)$ and it thus results from
  \cref{e:iym2} that $\sd\cdot\tau(x)=\ts\cdot\tau(x)=x$.
  In turn, it follows from \cref{e:ph02} and \cite[Lemma~3.2]{PartI}
  that
  \begin{equation}
    (\forall x\in D_{\sd})\quad
    \varphi(x)
    =\scal{\sd^{-1}\cdot x}{z}
    =\scal{x}{\sd\cdot z}.
  \end{equation}
  Hence, \cref{e:22j0} holds. On the other hand, the set
  $\conv(\SD\cdot z)$ is compact as the convex hull of the finite set
  $\SD\cdot z$.
  Therefore, combining \cref{e:7lpr,e:22j0},
  we deduce from \cref{l:2}\,\cref{l:2ii} that
  \begin{equation}
    \label{e:conv}
    (\forall x\in\XX)\quad
    \partial_{\CS}\varphi(x)\subset\conv(\SD\cdot z).
  \end{equation}
  Next, we learn from Lebourg's mean value theorem
  \cref{l:2}\,\cref{l:2ii} that there exist
  $V\in\menge{(1-\alpha)X+\alpha Y}{\alpha\in\intv{0}{1}}$ and
  $W\in\partial_{\CS}(\varphi\circ\gamma)(V)$
  such that
  \begin{equation}
    \scal{\gamma(X+Y)-\gamma(X)}{z}
    =(\varphi\circ\gamma)(X+Y)-(\varphi\circ\gamma)(X)
    =\scal{W}{Y},
  \end{equation}
  where the first identity follows from
  \cite[Proposition~3.5\,(i)]{PartI}.
  However, \cref{p:3} asserts that there exist finite families
  $(w_i)_{i\in I}$ in $\partial_{\CS}\varphi(\gamma(V))$,
  $(\ad_i)_{i\in I}$ in $\AD_V$,
  and $(\alpha_i)_{i\in I}$ in $\rzeroun$ such that
  \begin{equation}
    \sum_{i\in I}\alpha_i=1
    \quad\text{and}\quad
    W=\sum_{i\in I}\alpha_i\Lambda_{\ad_i}w_i.
  \end{equation}
  By \cref{e:conv} and \cref{e:da2t},
  $\set{\tau(w_i)}_{i\in I}\subset\conv(\SD\cdot z)$,
  and we thus derive from \cite[Propositions~3.8 and 3.5\,(i)]{PartI}
  that
  \begin{equation}
    (\forall i\in I)\quad
    \scal{\tau(w_i)}{\gamma(Y)}
    \leq\scal!{\tau(z)}{\tau\brk!{\gamma(Y)}}
    =\scal{\tau(z)}{\gamma(Y)}.
  \end{equation}
  Hence, by properties~\cref{d:1d} and \cref{d:1b} in \cref{d:1},
  \begin{align}
    \scal{\gamma(X+Y)-\gamma(X)}{z}
   &=\sum_{i\in I}\alpha_i\scal{\Lambda_{\ad_i}w_i}{Y}
   \nonumber\\
   &\leq\sum_{i\in I}\alpha_i\scal{\gamma(\Lambda_{\ad_i}w_i)}{\gamma(Y)}
   \nonumber\\
   &=\sum_{i\in I}\alpha_i\scal{\tau(w_i)}{\gamma(Y)}
   \nonumber\\
   &\leq\sum_{i\in I}\alpha_i\scal{\tau(z)}{\gamma(Y)}
   \nonumber\\
   &=\scal{\gamma(Y)}{\tau(z)},
  \end{align}
  which completes the proof.
\end{proof}

\begin{remark}
  \label{r:3}
  We now substantiate the claim made at the beginning of this section
  that \cref{t:3} unifies several existing results.
  \begin{enumerate}
    \item In the context of Hermitian matrices of \cref{ex:4},
      \cref{t:3} reads
      \begin{equation}
        \brk!{\forall X\in\HS^N(\KB)}
        \brk!{\forall Y\in\HS^N(\KB)}\quad
        \lambda(X+Y)-\lambda(X)\in\conv\brk!{\PS^N\cdot\lambda(Y)},
      \end{equation}
      and we thus recover Lidski\u{\i}'s theorem \cite{Lidskii50}.
    \item In the Euclidean Jordan algebra framework of
      \cref{ex:3}, \cref{t:3} reduces to
      \cite[Theorem~5.1]{Jeong20}, that is,
      \begin{equation}
        (\forall X\in\FH)(\forall Y\in\FH)\quad
        \lambda(X+Y)-\lambda(X)\in\conv\brk!{\PS^N\cdot\lambda(Y)}.
      \end{equation}
    \item By specializing \cref{t:3} to \cref{ex:5} with
      $\KB=\CC$, we obtain the version of Lidski\u{\i}'s theorem for
      singular values \cite[Theorem~IV.3.4 and Exercise~II.2.10]{Bhatia97}, that is,
      \begin{equation}
        \brk!{\forall X\in\CC^{M\times N}}
        \brk!{\forall Y\in\CC^{M\times N}}
        \quad
        \sigma(X+Y)-\sigma(X)\in\conv\brk!{\PS_{\pm}^m\cdot\sigma(Y)}.
      \end{equation}
    \item
      Consider the Eaton triple framework of \cref{ex:1+}
      and suppose additionally that
      \begin{equation}
        \XD=\KD-\KD.
      \end{equation}
      We show that $\SD$ is a finite group.
      On the one hand, \cref{e:eaton3} and the very definition of
      $\gamma$ ensure that
      $(\forall X\in\XD)$ $\KD\cap(\SD\cdot X)\neq\emp$.
      On the other hand, since $\tau=\restr{\gamma}{\XD}$ is the
      spectral-induced ordering mapping of the spectral decomposition
      system $\mathfrak{S}$ and since $\ran\tau=\KD$,
      we deduce from
      \cite[Proposition~3.3\,(iv) and (i)]{PartI} that
      $(\forall X\in\KD)(\forall Y\in\KD)$
      $\max_{\sd\in\SD}\scal{X}{\sd(Y)}
      =\scal{X}{Y}$.
      Hence, \cite[Theorem~3.2]{Niezgoda98}
      (applied to the Eaton triple $(\FH,\GS,\KD)$ and the reduced triple
      $(\XD,\SD,\KD)$) implies that $\SD$ is a finite group.
      Thus, \cref{t:3} is applicable and reduces to
      \cite[Theorem~6.4]{Hill01} in this context.
      This result encompasses, in particular, the
      Lie-theoretic majorization result due to
      Berezin and Gel'fand \cite{Berezin56}.
  \end{enumerate}
\end{remark}

\section{Conclusion}

Continuing from our previous work \cite{PartI}, we have derived results on
variational geometry and analysis in the abstract framework of spectral
decomposition systems that covers a wide range of related settings such as
eigenvalue or singular value decompositions of real, complex, and quaternion
matrices, or Euclidean Jordan algebras. Specifically, we have derived
representations of Fréchet and limiting normal cones to spectral sets and,
based on that, Fréchet and limiting subdifferentials of spectral functions.
These results further allowed deriving characterizations of Fréchet derivatives
and Clarke generalized gradients. Using the latter, we generalized
Lidski\u{\i}'s theorem on the spectrum of additive perturbations of Hermitian
matrices to arbitrary spectral decomposition systems.

This work can be extended in a number of directions. First, the representations
of generalized subdifferentials can be used to obtain explicit necessary
optimality conditions for concrete matrix optimization problems such as
low-rank matrix completion via nonconvex Schatten $p$-norm penalization in a
suitably general setting. In the context of variational analysis, an important
open question is on characterizations of Lipschitz-like properties of solution
mappings such as metric regularity or subregularity, cf. \cite[Chapter
27]{ClasonValkonen} and the literature cited therein. Such properties can then
be used to generalize results on second-order variational analysis of spectral
functions such as \cite{Sarabi25}.

\bibliographystyle{jnsao}
\bibliography{bibli}

\end{document}